\renewcommand{\Delta}{\triangle}
\definecolor{darkblue}{rgb}{0,0,0.7}
\definecolor{darkgreen}{rgb}{0.01,0.75,0.24}
\def \Ee[#1]{\mathcal{E}^{\text{{#1}}}}
\def\R{\mathbf{R}}  
\def\bq{\mathbf{q}}
\def\pa[#1,#2]{\frac{\partial {#1}}{\partial {#2}} }
\def\idom[#1,#2,#3]{\int_{#1}\hspace{1pt} {#2} \hspace{1pt} \text{d}{#3}}
\def\res[#1,#2]{\left.{#1}\right|_{#2}}
\def\gt{\rightarrow}
\def\var[#1,#2]{\langle \delta \mathcal{E}^{\text{{#1}}}({#2}),v\rangle}
\def\vars[#1,#2,#3]{\langle \delta^2\mathcal{E}^{\text{{#1}}}({#2})v,{#3}\rangle}
\def\vard[#1,#2,#3,#4]{\langle \delta\mathcal{E}^{\text{{#1}}}({#2})-\delta\mathcal{E}^{\text{{#3}}}({#4}),v\rangle}
\def\P{\mathcal{P}}
\def\E{\mathbb{E}}
\def\mO{\mathcal{O}}
\def\mD{\mathcal{D}}
\def\mH{\mathcal{H}}
\def\N{\mathbb{N}}
\newcommand{\bB}{\mathbf{B}}
\newcommand{\A}{\mathcal{A}}
\newcommand{\mL}{\mathcal{L}}
\newcommand{\X}{\mathcal{X}}
\newcommand{\eps}{\varepsilon}
\newcommand{\bx}{\mathbf{x}}
\newcommand{\by}{\mathbf{y}}
\newcommand{\bp}{\mathbf{p}}
\newcommand{\be}{\begin{equation}}
\newcommand{\en}{\end{equation}}
\newcommand{\ben}{\begin{equation*}}
\newcommand{\enn}{\end{equation*}}
\newcommand{\bea}{\begin{aligned}}
\newcommand{\ena}{\end{aligned}}
\def\ba#1\ena{\begin{align}#1\end{align}}
\def\ban#1\enan{\begin{align*}#1\end{align*}}
\theoremstyle{plain}
\newtheorem{thm}{Theorem}[section]
\newtheorem{lem}[thm]{Lemma}
\newtheorem{prop}[thm]{Proposition}
\newtheorem{assumption}[thm]{Assumptions}
\theoremstyle{remark}
\newtheorem{rem}[thm]{Remark}
\numberwithin{equation}{section}
\begin{document}
\title[Geometric ergodicity of Langevin dynamics with Coulomb interactions]{Geometric ergodicity of Langevin dynamics with Coulomb interactions}

 \author[Y. Lu]{Yulong Lu}
 \address[Y. Lu]{Department of Mathematics, Duke University, Durham NC 27708, USA}
\email{yulonglu@math.duke.edu}

 \author[J. C. Mattingly]{Jonathan C. Mattingly}
 \address[J. C. Mattingly]{Departments of Mathematics and Statistical Science, Duke University, Durham NC 27708, USA}
\email{jonm@math.duke.edu}

\begin{abstract}
This paper is concerned with the long time behavior of Langevin dynamics of {\em Coulomb gases} in $\mathbf{R}^d$ with $d\geq 2$, that is
a second order system of Brownian particles driven by an external force and
a pairwise repulsive  Coulomb force. We prove that the system converges exponentially to
the unique Boltzmann-Gibbs invariant measure under a weighted total variation distance. The proof relies on
a novel construction of Lyapunov function for the Coulomb system.
\end{abstract}

\keywords{Coulomb gas; Langevin dynamics; Interacting particle system; Geometric ergodicity; Lyapunov function}

\subjclass[2010]{Primary: 37A25, 60H10, 82C31,60K35; Secondary: 60B20.}

\maketitle

\section{Introduction }
 \subsection{Model description}
In this paper, we study a classical Langevin dynamics for a system of $N$ particles in $\R^d, d \geq 2$
subject to a confining external force and interacting
through the Coulomb force. The evolution of particles is described
by the following stochastic differential equations (SDEs):
\be\label{eq:LGN}
\begin{aligned}
		\frac{d\bq}{dt} & = \nabla_\bp H(\bq,\bp)\\
		\frac{d\bp}{dt} & = -\gamma \nabla_\bp
                H(\bq,\bp)\, dt - \nabla_\bq H(\bq,\bp)\,
                dt + \sqrt{\frac{2\gamma  }{\beta}}\frac{d\bB}{dt},
\end{aligned}
\en
where $\bp = (p_1, \cdots, p_N)\in (\R^{d})^N$ and $ \bq = (q_1, \cdots, q_N)\in (\R^{d})^N$
are vectors of positions and momentum of the $N-$particles in the Euclidean space
$\R^d$ with $d \geq 2$ and  $\bB(t)$ is a vector of $N$ independent $d$-dimensional Brownian motions.
Here the constant $\gamma >0$ is a friction parameter, $\beta$
is an inverse temperature parameter.

The Hamiltonian (or energy) $H(\bq, \bp)$ is defined by
\be
\begin{aligned}\label{eq:H}
 H (\bq,\bp):=  \frac{1}{2} \sum_{i=1}^N |p_i|^2
 + \sum_{i=1}^N V(q_i) +  \frac{1}{2N}\sum_{1 \leq i\neq j\leq N}K(q_i -q_j),
\end{aligned}
\en
which is the sum of the kinetic energy $\frac{1}{2}\sum_{i=1}^N |p_i|^2$ and the potential energy  $U(\bq)$ defined by
\be\label{eq:U}
U(\bq) :=  \sum_{i=1}^N V(q_i) +  \frac{1}{2N}\sum_{1 \leq i\neq j\leq N}K(q_i -q_j).
\en
The potential energy consists of two parts: an external energy and an interacting energy.
Observe that the $N$-dependent prefactors appearing in the interaction term
in the definition of the Hamiltonian $H$  keep them balanced in the
same order $\mathcal{O}(N)$.
The function $V:\R^d \gt \R$ is an external confining potential, growing fast enough at infinity, and satisfying some extra conditions to be specified later. The pairwise interacting kernel $K:\R^d \gt \R$ is the Coulomb kernel in $\R^d$:
\be\label{eq:Coulomb}
K(x) = \begin{cases}
-\log |x| & \text{ if  } d= 2,\\
 |x|^{2-d} & \text{ if  }  d \geq 3.
\end{cases}
\en
 The Coulomb kernel $K$ is the fundamental solution of the Poisson equation in the free
physical space $\R^d$, i.e.
\begin{align*}
  - \Delta K = c_d \delta_0, \text{ with } c_d = \begin{cases}
                                             2\pi & \text{ if } d = 2\\
                                             (d-2)|\mathbb{S}^{d-1}| = \frac{d(d-2)\pi^{d/2}}{\Gamma(1+d/2)} & \text{ if } d \geq 3.
                                            \end{cases}
\end{align*}

It is easy to check that the Boltzmann-Gibbs measure $
\pi(d\bq \,d \bp)$ is an  invariant measure of the particle system \eqref{eq:LGN}:
\begin{align*}
  \pi(d\bq \,d \bp) =  P(d \bp)\otimes Q(d \bq)  = \frac{1}{Z}e^{-\beta H(\bq, \bp)}d\bq\, d \bp,
\end{align*}
where
\begin{align*}
  P (d\bp) = \frac{1}{Z_p} \exp\Big(-\frac{\beta}{2} |\bp|^2\Big) \text{ and }
Q (d\bq) = \frac{1}{Z_q} \exp\Big(-\beta U(\bq)\Big).
\end{align*}
The primary aim of this paper is  to prove that the particle system
\eqref{eq:LGN} converges exponentially fast to the (unique) invariant
measure $\pi$ as $t\gt \infty$. This result will be a consequence of a
classical Harris ergodic theorem which assumes the existence for a
minorizing measure and a Lyapunov function in the spirit of
\cite{meyn1993stability,meyn2012markov,hairer2011yet}. The minorizing measure is
the source of the probabilistic mixing while the Lyapunov function
ensures the dynamics does not escape to infinite. The system is
hypoellitpic which quickly implies the minorization condition \cite{mattingly2002ergodicity}.
 The main contribution of this paper is to construct a
Lyapunov function in the Coulomb setting. The construction used in
\cite{mattingly2002ergodicity}, does not seem to work for the Coulomb system. The approach used here is a
further refinement of the ideas developed in \cite{herzog2017ergodicity}.
This is further discussed in Section~\ref{sec:DiscussionWrgodicity}.

\subsection{Motivating and connected problems}

The dynamics \eqref{eq:LGN} is usually referred to as the inertial Langevin equation in the physical-chemistry literature.
It models for example the time evolution of a system of confining charged particles (e.g. electrons)
 interacting  each other through the repulsive Coulomb force in a thermal environment. When $\gamma\gt \infty$, the Langevin dynamics \eqref{eq:LGN}, after proper accelerating in time,  reduces to
 the following first order equation
\be\label{eq:OLGN}
\frac{d \bq}{dt} = -\nabla U(\bq) + \sqrt{2\beta^{-1}} \frac{d\bB}{dt}
\en
or more concretely
\be\label{eq:OLGN2}
\frac{d q_i}{dt} = -\nabla V(q_i) - \frac{1}{N}\sum_{j\neq i}^N \nabla K (q_i -q_j) +\sqrt{2\beta^{-1}}\frac{dB_i}{dt}, i = 1,2,\cdots, N.
\en
The dynamics \eqref{eq:OLGN} is usually called the overdamped limit of the Langevin equation \eqref{eq:LGN}.
A rigorous derivation of the overdamped limit can be found in \cite{stoltz2010free}.
Note that the equilibrium measure of the overdamped Langevin equation \eqref{eq:OLGN} is given by
\be\label{eq:QN}
Q(d\bq)  \propto \exp\Big(-\beta \Big(\sum_{i=1}^N V(q_i) + \frac{1}{2N} \sum_{1\leq i \neq j \leq N} K(q_i - q_j)\Big)\Big)d\bq.
\en
When $\beta = \beta_N = N\tilde{\beta}$, the equilibrium measure $Q(d\bq)$ is usually refered to as the {\em Coulomb gases} model.
To clarify the terminology, we will refer \eqref{eq:LGN} and \eqref{eq:OLGN} as the {\em dynamical} Coulomb gases, and the
 measure \eqref{eq:QN} as the {\em steady} Coulomb gases.
In recent years, there has been a growing interest in the
probabilistic properties of steady Coulomb gases in meso/microscopic regimes; see e.g. \cite{SS15,SS152,RS16,S16,LS18}
and the references therein. However, the long time behavior of dynamical Coulomb gases was less understood.
For the overdamped model \eqref{eq:OLGN} with a logarithmic $K$ in one dimension, the exponential convergence to the equilibrium can be established by using the
Bakry-\'Emery argument thanks to the convexity of the negative logarithmic function in 1D; see e.g. \cite{EY17}. The recent paper \cite{bolley2018dynamics}
studied the convergence to the equilibrium in two dimensions via the Poincar\'e inequality.

The Coulomb gas model \eqref{eq:QN} has many important connections with other models in mathematical physics. In 1D or 2D, the Coulomb gas model $Q$ is closely related to some famous random matrix models.  In fact, if the potential $V$ is quadratic, e.g. $V(q) = \frac{|q|^2}{2\beta}$, then under the scaling  $\beta = N\tilde{\beta}$, the measure $Q$ corresponds to the famous $\beta$-Hermite and $\beta$-Ginibre ensemble in 1D and 2D respectively. In particular, when $\tilde{\beta}=2$, the gas describes the eigenvalue distribution of a random $N\times N$ matrix of i.i.d. real and complex normal entries, whose limiting distribution are  the Wigner's semicircle law and the circular law  respectively. Because of the link between  the equilibrium measure $Q$ and the eigenvalue distribution of random matrix ensembles,  the overdamped dynamics  \eqref{eq:OLGN2}
and the under-damped dynamics \eqref{eq:OLGN2} can be used for Monte Carlo
simulation of random matrices.
The recent paper \cite{chafai2018simulating}
proposed an efficient Hybrid Monte Carlo (HMC) method for sampling Coulomb gases,
based on a Metropolis-Hastings algorithm with a proposal produced
by discretizing the second order dynamics \eqref{eq:LGN}. However, the convergences of the sampling algorithm proposed there
and the associated continuous dynamics \eqref{eq:LGN} have not been analyzed yet.
This paper provides a theoretical underpinning for the HMC algorithm of
\cite{chafai2018simulating} by showing that the \eqref{eq:LGN} converges to the equilibrium Gibbs measure $\pi$
exponentially fast. Finally, we would like to point out a link between the dynamical Coulomb gas and partial differential equations (PDEs). At the dynamical level, one expects that as $N\gt \infty$ the empirical measure of the SDEs \eqref{eq:LGN} converges to its mean field limit which is the so-called Vlasov-Possion-Fokker-Planck equation. In the case where the kernel is Lipschitz, this picture has been well-understood. However, rigourous justifications of the mean field limits of dynamics of interacting particles  with singular kernels, especially the Coulomb kernel is highly non-trivial and far from understood. We will not discuss this problem in the present paper, but we refer the interested readers to \cite{fournier2014propagation, jabin2018quantitative, serfaty2018mean,liu2016propagation,jabin2016mean,CCS18,huang2018mean} for some recent progress.

 \subsection{Discussion on ergodicity for the Langevin equation}
\label{sec:DiscussionWrgodicity}
The Langevin equation is an SDE with degenerate noise. In the last few decades,
both probabilistic and analytical methods have been developed for studying the long time behavior of the
Langevin dynamics and the corresponding kinetic Fokker Planck equation. At the PDE level, an important analytical tool
is based on the so-called {\em hypocoercivity} initiated by Villani \cite{dric2009hypocoercivity} and then later
advanced by Dolbeault, Mouhot and Schmeiser \cite{dolbeault2009hypocoercivity,dolbeault2015hypocoercivity}.
The framework of hypercoercivity was also  employed recently by \cite{conrad2010construction,grothaus2015hypocoercivity}
to prove ergodicity for Langevin particle systems with some mild singular potentials excluding the Coulomb potential.

At the probabilistic level, the geometric ergodicity of the Langevin dynamics with regular drifts has been
proved by \cite{talay2002stochastic,mattingly2002ergodicity,wu2001large} based on the
techniques of Meyn and Tweedie \cite{meyn1993stability,meyn2012markov} and Khasminskii \cite{khasminskii2011stochastic}.
These techniques have been formalized now a standard route of establishing ergodicity of Markov processes: the validity of
a local minorization condition  and the existence of  a Lyapunov function; see \cite{hairer2011yet}.
The local minorization condition leads to contractivity of any two coupled stochastic trajectories in some average sense if they both start from a local compact set $\mathcal{C}$.
The existence of a Lyapunov function guarantees that the process return  to the compact set $\mathcal{C}$ infinitely many often and  usually provides quantitative estimates on the excursion time outside the set $\mathcal{C}$.
In the case where the underlying diffusion is degenerate but hypoelliptic, such as the Langevin dynamics considered here,
the minorization condition can be justified by a standard control-type argument \cite{mattingly2002ergodicity}. However, constructing Lyapunov functions for
degenerate diffusions is more technical and is problem-dependent.

For Langevin dynamics, a natural candidate of Lyapunov function would be the Hamiltonian.
Yet, due to the absence of diffusion in the position-coordinate, the Hamiltonian fails to be a real Lyapunov function since
it gives no dissipation in the position-coordinate. In the case where the drift of Langevin dynamics is regular, adding a $cp\cdot q$
term into the Hamiltonian with a suitable small $c>0$
enables transferring
 the dissipation from  the momentum  variable to the position and hence
 leads to a Lyapunov function, see \cite{talay2002stochastic,mattingly2002ergodicity}.
However, this $p q$ trick does not work when the drift of Langevin dynamics is singular. Recently, Herzog and Mattingly
\cite{herzog2017ergodicity} (see also \cite{cooke2017geometric})
constructed a Lyapunov function for a Langevin particle system with a class of admissible singular interaction kernels, including
Lennard-Jones type kernels.
The key idea underlying their construction is to understand the high energy limit of the dynamics via apppriate space-time scaling.
Lyapunov functions can then be built by identifying the effective/dominant generator in the high energy limit. However,
the Coulomb kernel (at least in 1D/2D) can not be treated in the framework of
\cite{herzog2017ergodicity,cooke2017geometric} since the admissible condition of \cite{herzog2017ergodicity} (see also below) is violated.


In this work, we revisit the idea
of \cite{herzog2017ergodicity} and tailor an
explicit Lyapunov function for the dynamical Coulomb gas \eqref{eq:LGN}
in $\R^d$ with any $d\geq 2$. In fact, the same Lyapunov function also works for dynamical Log gases \cite{F10} in one dimensions,
but we restrict our attension here on $d\geq 2$ only for ease of exposition.

\subsection{Plan of the paper} The rest of the paper is organized as follows. In the next section, we set up the model
with some assumptions on the potential function and state
the main results of the paper. In Section \ref{sec:model1}, we develop
some heuristics by building a Lyapunov function for a simple two-body Coulomb system.
In Section \ref{sec:model2}, we extend this heuristic idea to general many body Coulomb systems.
The proofs of main results are given in Section \ref{sec:proof}. Finally, we conclude in Section \ref{sec:ext} with several extensions and outlook.

\section{Set-up and Main results}
Let us recall the SDE \eqref{eq:LGN} and rewrite it as
\be\label{eq:LG}
\begin{aligned}
		\frac{d\bq}{dt} & = \bp\\
		\frac{d\bp}{dt} & = -\gamma \bp\, dt - \nabla U(\bq)\, dt + \sqrt{2\gamma \beta^{-1}}\frac{d\bB}{dt},
\end{aligned}
\en
where
$$
U(\bq)  =  \sum_{i=1}^N V(q_i) + \frac{1}{2N}\sum_{1\leq i\neq j \leq N}  K (q_i -q_j) .
$$
Recall also the Hamiltonian $H(\bq,\bp)$ defined by \eqref{eq:H}:
$$
H(\bq, \bp) = \frac{1}{2} |\bp|^2 + \sum_{i=1}^N V(q_i) + \frac{1}{2N}\sum_{1\leq i\neq j \leq N}  K (q_i -q_j).
$$
We aim to show that the dynamics \eqref{eq:LG} converges to the Boltzmann-Gibbs measure
$$
\pi(d\bq d\bp) = \frac{1}{Z} e^{-\beta H(\bq, \bp)}d\bq d\bp.
$$

To this end, we need to make some technical assumptions on the external potential $V$.
\begin{assumption}\label{ass:v}
The function $V:\R^d\gt \R $ is at least twice differentiable and satisfies


(i) $V(q) \geq 0$ for all $q\in \R^d$ and there exist constants $c_1,c_2,c_3, M > 0$ such that
\be\label{eq:assv1}
 V(q) \geq c_1|q|^2 - M \ \forall q\in \R^d,
\en
and that
\be\label{eq:assv2}
 c_2 V(q) -M \leq \nabla V(q) \cdot q \leq c_3 V(q) + M \ \forall q\in \R^d.
\en

(ii) For any $\eps > 0$, there exists $M_\eps > 0$ such that

\be\label{eq:assv3}
|\nabla V(q)| \leq \eps V(q) + M_\eps \ \forall q\in \R^d.
\en
\end{assumption}

\begin{rem}
 The quadratic growth condition \eqref{eq:assv1} implies that $V$ is strongly confining and it is necessary
 for the $pq$ trick to work in constructing Lyapunov functions for Langevin dynamics, see e.g. \cite{mattingly2002ergodicity}.
 It also implies that
 $$
 Z = \iint_{\R^d \times \R^d}  e^{-\beta H(\bq,\bp)}d\bq\, d\bp < \infty.
 $$
  The inequalities in \eqref{eq:assv2} are standard dissipative conditions
 on $V$.
 The condition \eqref{eq:assv3} controlling  $\nabla V$ by $V$ itself will be useful for
 our explicit construction
 of Lyapunov function for the Coulomb dynamics.  Assumption \ref{ass:v} is by no means sharp, but proves to
be sufficient for the validity of our main results. In particular, the standard double well function
$V(q) = \frac{1}{4} (1 - |q|^2)^2$ fullfils Assumption \ref{ass:v}.

\end{rem}

We denote by  $\mD$ the domain of $U$ in $(\R^{d})^N$, i.e.
$$
\mD  := \{\bq \in(\R^{d})^N: U(\bq) < \infty \}.
$$
We also define the state space  $\X := \mD \times (\R^d)^N$.
Due to the singularity of $K$ and the growth condition on $V$ at  infinity, it is clear that
\be\label{eq:O}
\mD = \{\bq = (q_1, q_2,\cdots, q_N) \in(\R^{d})^N: q_i\neq q_j \text{ if } i\neq j \text{ and } q_i < \infty\ \forall i=1,2,\cdots, N \}.
\en
The boundary of $\mD$ is given by
$$
\partial \mD := \{\infty\}\cup \bigcup_{1\leq i\neq j\leq N} \{\bq = (q_1, q_2,\cdots, q_N) \in(\R^{d})^N: q_i = q_j\}.
$$
Similarly, we have the boundary of $\X$ given by
$$
\partial \X := \{\partial \mD \times (\R^d)^N\} \cup \{\mD \times \{\infty\} \}.
$$
In addition, we have the following lemma whose proof is trivial and hence omitted.
\begin{lem}

(i) The set $\mD$ is open and path connected;

(ii) The set $\mD_R := \{\bq \in(\R^{d})^N: U(\bq ) < R\}$ is precompact in $(\R^{d})^N$ for every $R > 0$.
\end{lem}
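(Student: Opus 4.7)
The plan is to treat the two assertions separately; both are direct consequences of the definitions and Assumption \ref{ass:v}, and no non-trivial machinery is required. Since $V$ is finite on $\R^d$ and $K$ blows up only at the origin, one checks that $U(\bq) = \infty$ if and only if $q_i = q_j$ for some $i\neq j$, so $\mD$ coincides with the complement in $(\R^d)^N$ of the finite union of diagonals $\Delta_{ij} := \{\bq : q_i = q_j\}$.

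For part (i), openness is immediate from the above identification, since each $\Delta_{ij}$ is a closed linear subspace. For path-connectedness I would exploit that each $\Delta_{ij}$ has codimension exactly $d\geq 2$ in $(\R^d)^N$: given $\bq^{(0)},\bq^{(1)}\in\mD$, a generic choice of auxiliary point $\bq^{(*)}\in\mD$ produces two straight segments $[\bq^{(0)},\bq^{(*)}]$ and $[\bq^{(*)},\bq^{(1)}]$ that avoid every $\Delta_{ij}$, because each segment is $1$-dimensional while the obstruction is a finite union of codimension-$\geq 2$ submanifolds, so generic transversality gives empty intersection. The assumption $d\geq 2$ made at the outset of the paper is precisely what makes this argument work.

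For part (ii), the goal is to show $\mD_R$ is bounded in $(\R^d)^N$, whence precompactness follows by Heine--Borel. The quadratic confinement \eqref{eq:assv1} gives
\ben
\sum_{i=1}^N V(q_i) \geq c_1 \sum_{i=1}^N |q_i|^2 - NM.
\enn
When $d\geq 3$, the kernel $K(x) = |x|^{2-d}$ is nonnegative, so $U(\bq) < R$ immediately yields $\sum_i |q_i|^2 \leq (R+NM)/c_1$ and boundedness is automatic. The only genuinely delicate case is $d=2$, where $K(x) = -\log|x|$ is unbounded below at infinity. Here I would use the elementary inequality $-\log r \geq -\eps r^2 - C_\eps$ (valid for all $r>0$ with $C_\eps$ depending on $\eps$) together with $|q_i - q_j|^2 \leq 2(|q_i|^2+|q_j|^2)$ to obtain
\ben
\frac{1}{2N}\sum_{i\neq j} K(q_i-q_j) \geq -\eps' \sum_{i=1}^N |q_i|^2 - C(N,\eps')
\enn
for an arbitrarily small $\eps'>0$. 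Choosing $\eps' < c_1/2$ and combining with the bound on $\sum V(q_i)$ yields $\sum_i |q_i|^2 \leq C(R,N)$, completing the proof. The small-$\eps$ absorption in the logarithmic case is the only place requiring care; no serious obstacle is anticipated.
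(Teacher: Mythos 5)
Your proof is correct; the paper omits its own proof of this lemma as ``trivial,'' so there is nothing to compare against beyond the closely related Lemma 5.3 (coercivity of $H$), whose $d=2$ computation your part (ii) essentially reproduces with the bound $-\log r \geq -\eps r^2 - C_\eps$ in place of the paper's $\log x \leq x$ plus Young's inequality. The codimension-$d\geq 2$ argument for path-connectedness in part (i) and the absorption of the logarithmic term into the quadratic confinement in part (ii) are exactly the right ingredients, and both steps are carried out correctly.
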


Due to the singularity of the Coulomb kernel, the existence and uniqueness  of a  global pathwise solution to
the SDE \eqref{eq:LG} does not automatically follow from a
standard solution theory of SDE based on local Lipschitz continuity.
In particular, it is not guaranteed a priori that two particles could not collide.
The next proposition shows the global existence and uniqueness of the process $\bx = (\bq, \bp)\in \X$ satisfying \eqref{eq:LG}
and thus demonstrates  that collisions between particles never occur in finite time.

\begin{prop}\label{prop:exist}
Given any initial condition $\bx (0) = (\bq(0), \bp(0))\in \X $, there exists a unique pathwise solution $\bx(t) = (\bq(t), \bp(t))$ satisfying  the equation \eqref{eq:LG}. Moreover, $\bx(t)\in \X$ for any finite time $t > 0$ almost surely.
\end{prop}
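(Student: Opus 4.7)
The plan is to use the Hamiltonian $H(\bq,\bp)$ itself as a coarse Lyapunov function for non-explosion; the more delicate Lyapunov function needed for geometric ergodicity will be built later in the paper. Because $V\in C^2$ and $K$ is smooth on $\R^d\setminus\{0\}$, the drift of \eqref{eq:LG} is locally Lipschitz on the open set $\X$, so standard SDE theory yields a unique strong solution up to an explosion time $\tau=\lim_{n\to\infty}\tau_n$ with
\[
  \tau_n := \inf\{t\geq 0 : H(\bq(t),\bp(t))\geq n\},
\]
and the whole task reduces to proving $\tau_n\to \infty$ almost surely.

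First I would verify that $H$ is coercive on $\X$ relative to $\partial \X$, so that the sublevel set $\X_n:=\{(\bq,\bp)\in \X : H(\bq,\bp)<n\}$ is precompact in $\X$ and stays a positive distance from $\partial \X$. Coercivity in $\bp$ is immediate from the kinetic term. Coercivity in each $|q_i|$ uses the confining bound $V(q)\geq c_1|q|^2-M$ from \eqref{eq:assv1}; in $d=2$ the (possibly negative) logarithmic interactions $-\tfrac{1}{2N}\log|q_i-q_j|$ are bounded below by $-\tfrac{N-1}{2}\log(2\max_k|q_k|)$ and are therefore absorbed by the quadratic growth of $V$. Repulsion from the collision diagonals $\{q_i=q_j\}$ follows because $K(q_i-q_j)\to +\infty$ as $|q_i-q_j|\to 0$ in both $d=2$ and $d\geq 3$, while the remaining terms stay bounded once the $|q_k|$'s are controlled. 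The same bookkeeping also shows $H\geq -C_0$ on $\X$ for some constant $C_0\geq 0$. This coercivity check splits into the cases $d=2$ and $d\geq 3$ but is otherwise routine, and it is the only genuinely non-trivial step of the proof.

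Next I apply the generator
\[
  \mathcal{L} = \bp\cdot\nabla_\bq - \gamma\,\bp\cdot\nabla_\bp - \nabla U(\bq)\cdot\nabla_\bp + \frac{\gamma}{\beta}\Delta_\bp
\]
to $H$. The conservative Hamiltonian contributions $\bp\cdot\nabla_\bq H$ and $\nabla U(\bq)\cdot\nabla_\bp H$ cancel, yielding
\[
  \mathcal{L} H = -\gamma|\bp|^2 + \frac{\gamma N d}{\beta} \;\leq\; \frac{\gamma N d}{\beta}.
\]
Since $H$ is smooth on $\X$ and the solution remains in $\X$ up to $\tau_n$, Itô's formula and optional stopping on $[0,t\wedge\tau_n]$ give
\[
  \E\bigl[H(\bx(t\wedge\tau_n))+C_0\bigr] \;\leq\; H(\bx(0))+C_0 + \frac{\gamma N d}{\beta}\,t.
\]
On $\{\tau_n\leq t\}$ the left-hand integrand is at least $n+C_0$ by continuity of $s\mapsto H(\bx(s))$ and the definition of $\tau_n$, so applying Markov's inequality to the non-negative random variable $H(\bx(t\wedge\tau_n))+C_0$ gives
\[
  \PP(\tau_n\leq t) \;\leq\; \frac{H(\bx(0))+C_0+\gamma N d\,t/\beta}{n+C_0} \;\xrightarrow[n\to\infty]{}\; 0.
\]
Letting $n\to\infty$ and then $t\to\infty$ yields $\tau=\infty$ almost surely, which simultaneously delivers global pathwise existence/uniqueness and, since $\bx(t)\in \X_n\subset \X$ for $t<\tau_n$, the fact that $\bx(t)\in \X$ for every finite $t>0$ almost surely.
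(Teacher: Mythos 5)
Your proof is correct, and it takes a genuinely different (and more elementary) route than the paper. The paper proves Proposition~\ref{prop:exist} as a corollary of Proposition~\ref{prop:pw}, which runs the same stopping-time argument but with the tailored Lyapunov function $W=e^{aH+\Psi}$ from Proposition~\ref{prop:Lyapunov} in place of $H$; the coercivity input you supply by hand is the paper's Lemma~\ref{lem:coercive}, and your $d=2$ bookkeeping (bounding $-\log|q_i-q_j|$ by $-\log(2\max_k|q_k|)$ and absorbing it into the quadratic growth of $V$) is a minor variant of the paper's use of $\log x\leq x$. The authors explicitly acknowledge that your route works --- they remark both that ``Proposition~\ref{prop:exist} can be proved by just using the Hamiltonian $H$ as the Lyapunov function'' and, after Proposition~\ref{prop:pw}, that the $H$-based argument yields $\mathbf{E}_{\bx}H(\bx_t)\leq H(\bx)+Ct$ and hence global well-posedness. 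What your approach buys is self-containedness: it needs only $\mL H=-\gamma|\bp|^2+\gamma Nd/\beta$ and coercivity of $H$, not the delicate construction of $\Psi$ in Sections~\ref{sec:model1}--\ref{sec:model2}. What it gives up is the uniform-in-time exponential moment bound \eqref{eq:PW}, $P_tW\leq e^{-\lambda t}W+C/\lambda$, which the paper needs as the Lyapunov hypothesis of the Harris theorem in the proof of Theorem~\ref{thm:ergodicity}; your linear-in-$t$ bound on $\E H$ suffices for non-explosion and non-collision but not for geometric ergodicity. For the statement as posed, your argument is complete.

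One small point worth making explicit if you write this up: you should note that $\tau_n$, defined as the first time $H\geq n$, genuinely agrees with the exit time from a compact subset of $\X$ --- this is exactly what the precompactness of $\{H<n\}$ in $\X$ delivers, and it is the reason the coercivity step is not optional. You do flag this, so the logic is sound.
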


Proposition \ref{prop:exist} can be proved by just using the Hamiltonian $H$ as the Lyapunov function, but we will give a proof in Section \ref{sec:proof} based on
a new Lyapunov function tailored for the dynamics \eqref{eq:LG}; see Proposition \ref{prop:pw}.

Thanks to Proposition \ref{prop:exist}, one can define the Markov semigroup $P_t$ associated to the SDE \eqref{eq:LG} by
$$
(P_t \varphi) (\bq, \bp) := \E_{(\bq, \bp)}[ \varphi(\bq(t), \bp(t))]
$$
where $(\bq(t), \bp(t))$ is the solution process of \eqref{eq:LG} starting from  $(\bq, \bp)\in \X$ and $\varphi:\X \gt \R$ is a bounded measurable function. Let $\mathscr{P}_\X$ be the space of probability measures on $\X$.  We also define the dual operation of $P_t$ on probability measures by setting for a given probability measure $\nu\in \mathscr{P}_X$,
$$
(\nu P_t) (A) = \int_\X (P_t \mathbf{1}_A) (\bq, \bp) \nu (d\bq\, d\bp),
$$
where $A$ is a Boreal measurable subset of $\X$ and $\mathbf{1}_A$ is the indicator function of $A$.

The generator $\mL$ associated to the semigroup $P_t$ is given for every smooth test function $f: \X \gt \R$ by
\be\label{eq:generator}
\mL f = \bp \cdot \nabla_\bq f - \gamma \bp \cdot \nabla_\bp f - \nabla_\bq U(\bq) \cdot \nabla_\bp f + \gamma \beta^{-1}\Delta_\bp f.
\en
With the definition of the generator $\mL$, we introduce an important concept of {\em Lyapunov} function.
We say a function $W: \X \gt (0,\infty)$ is a {\em Lyapunov} function of the equation \eqref{eq:LG}
if $W(\bq, \bp) \gt \infty$ as $H(\bq, \bp)  \gt \infty$ and
there exist constants $\lambda, C > 0$ such that
\be\label{eq:Lyapunov}
\mL W \leq -\lambda W + C.
\en

The main result of the paper addresses the convergence to equilibrium for the dynamical system of Coulomb gas  \eqref{eq:LG}. Motivated by \cite{hairer2011yet, herzog2017ergodicity}, we define a weighted total variation distance of probability measures.  To be more specific, for any measurable function $W: \X \gt (0,\infty)$, we define the set $\mathscr{P}_W$ by
$$
\mathscr{P}_W := \{\nu \in \mathscr{P}_\X: \int_\X W(\bx) \nu (d\bx) < \infty\}.
$$
The weighted total variation metric $\rho_W$ on $\mathscr{P}_W$ is defined for any $\nu_1, \nu_2\in \mathscr{P}_W$ by
$$
\rho_W (\nu_1, \nu_2) =\sup_{|\varphi|_W \leq 1} \int_\X \varphi(x) (\nu_1 (d\bx)  - \nu_2(d\bx)),
$$
where the semi-norm $|\varphi|_W$ is defined by
$$
|\varphi|_W = \sup_{\bx\in \X} \frac{|\varphi(\bx)|}{1 + W(\bx)}.
$$
The convergence to the equilibrium result is stated in the theorem below.

\begin{thm}\label{thm:ergodicity}
The Boltzmann-Gibbs measure $\pi(d\bq\,d\bp)$ is the unique invariant measure of the dynamics \eqref{eq:LG}. Moreover, for every $a\in (0,\beta)$, there exists
$W\in C^2(\X,(0,\infty))$ and constants $\lambda, C> 0$ such that
$$
W(\bx) = \exp(a H(\bx) +\Psi(\bx)),
$$
with $\Psi(\bx) = o(H(\bx))$ as $H(\bx) \gt \infty$
and such that for every $\nu \in \mathscr{P}_W $ and $t \geq 0$
\be\label{eq:expdecay}
\rho_W(\nu P_t, \pi) \leq Ce^{-\lambda t}\rho_W(\nu, \pi).
\en
\end{thm}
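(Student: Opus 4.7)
My plan is to apply the Harris-type ergodicity theorem for Markov semigroups in weighted total variation, as formulated in \cite{hairer2011yet}. This reduces the proof to two ingredients: (i) a $C^2$ Lyapunov function $W$ of the prescribed form satisfying $\mL W \le -\lambda W + C$ on $\X$, and (ii) a uniform minorization $P_{t_\ast}(\bx,\cdot) \ge \eta\,\nu(\cdot)$ on every sublevel set $\{W \le R\}$. Given both, Harris' theorem yields simultaneously the existence of a unique invariant measure and the contraction \eqref{eq:expdecay}. The Gibbs measure $\pi$ is readily verified to be stationary, and $\pi\in\mathscr{P}_W$ for every $a\in(0,\beta)$ because $W\,d\pi\propto e^{(a-\beta)H+\Psi}d\bq\,d\bp$ is integrable thanks to $a<\beta$ and $\Psi=o(H)$. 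Identifying the Harris limit with $\pi$ is therefore automatic, and the real content of the theorem sits in (i) and (ii), with (i) being the genuinely non-trivial step.

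Computing $\mL W$ directly from \eqref{eq:generator} with $W=\exp(aH+\Psi)$, and using $\mL H=-\gamma|\bp|^2+\gamma\beta^{-1}Nd$, the chain rule yields
\[
\frac{\mL W}{W} \;=\; -a\gamma\bigl(1-a/\beta\bigr)|\bp|^2 \;+\; \mL\Psi \;+\; 2a\gamma\beta^{-1}\,\bp\cdot\nabla_\bp\Psi \;+\; \gamma\beta^{-1}|\nabla_\bp\Psi|^2 \;+\; a\gamma\beta^{-1}Nd.
\]
For $a\in(0,\beta)$ the first term dissipates in momentum, but it cannot by itself dominate $H$, which also grows with $\bq$. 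The classical $\bp\cdot\bq$ trick of \cite{mattingly2002ergodicity} cures this in the regular-drift case by arranging $\mL\Psi$ to contain a term of the form $-c\bq\cdot\nabla_\bq U$; the confining piece $-c\bq\cdot\nabla V(\bq)$ then provides dissipation in $V$ via \eqref{eq:assv2}. The obstruction in the Coulomb setting is that the same construction also generates $-c\bq\cdot\nabla_\bq K(q_i-q_j) = c(d-2)K(q_i-q_j)$ for every interacting pair (or a bounded constant when $d=2$), which in dimensions $d\ge 3$ is positive and unbounded on the collision set, destroying the Lyapunov sign exactly where the drift is most singular. This is the admissibility failure flagged in \cite{herzog2017ergodicity}.

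Following the heuristics developed in Sections \ref{sec:model1}--\ref{sec:model2}, I would therefore augment the $\bp\cdot\bq$ piece with an explicit near-collision correction. Working in relative coordinates $r=q_i-q_j$, $\tilde p=p_i-p_j$ for each pair $(i,j)$, I would identify the dominant two-body Coulomb generator at high energy under an appropriate space-time scaling and build an explicit $\psi_{ij}(r,\tilde p)$ whose contribution to $\mL\Psi$ cancels the singular $+c(d-2)K(q_i-q_j)$ term and replaces it by a manifestly negative quantity comparable to the Coulomb self-energy. The global correction then takes the form of a suitably normalized $\bp\cdot\bq$ part plus $\sum_{i<j}\chi_{ij}\psi_{ij}$, with smooth cutoffs $\chi_{ij}$ localizing each $\psi_{ij}$ to a neighborhood of $\{q_i=q_j\}$ and with all scale parameters chosen so that $\Psi=o(H)$ at infinity. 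Invoking \eqref{eq:assv2}--\eqref{eq:assv3}, the remaining cross-term $2a\gamma\beta^{-1}\bp\cdot\nabla_\bp\Psi$ and the quadratic error $\gamma\beta^{-1}|\nabla_\bp\Psi|^2$ should be absorbable into the combined momentum and position dissipation, yielding $\mL W \le -\lambda W+C$ on $\X$. Verifying this inequality uniformly across every regime---large $|\bp|$, large $|\bq|$, small $|q_i-q_j|$, and their joint behaviors---is the main technical obstacle and the central novelty.

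With the Lyapunov function in hand, ingredient (ii) is comparatively standard. Any sublevel $\{W\le R\}$ is essentially a sublevel of $H$, hence compact in $\X$ and bounded away from the collision manifold, so the coefficients of \eqref{eq:LG} are smooth and bounded there. The first-order Lie brackets of the noise vector fields $\partial_\bp$ with the drift recover the position directions $\partial_\bq$, giving H\"ormander's condition uniformly on $\{W\le R\}$; a short controllability argument for the deterministic skeleton of \eqref{eq:LG} combined with the Stroock--Varadhan support theorem then produces a time $t_\ast>0$ and a common reference measure $\nu$ with $P_{t_\ast}(\bx,\cdot)\ge\eta\,\nu(\cdot)$ uniformly for $\bx\in\{W\le R\}$, as in \cite{mattingly2002ergodicity}. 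Combining (i) and (ii) via the Harris theorem of \cite{hairer2011yet} delivers both the uniqueness of $\pi$ and the exponential contraction \eqref{eq:expdecay}, completing the proof.
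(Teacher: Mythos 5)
Your overall strategy coincides with the paper's: Harris' theorem from \cite{hairer2011yet} applied to a Lyapunov function of the form $W=e^{aH+\Psi}$ with $\Psi$ combining a $\bp\cdot\bq$ term and pairwise corrections built from $\tfrac{(p_i-p_j)\cdot(q_i-q_j)}{|q_i-q_j|}$, plus a hypoellipticity/support-theorem minorization on sublevel sets. Your formula for $\mL W/W$ is correct, and your diagnosis of why the bare $\bp\cdot\bq$ trick fails (the positive singular term $+cK(q_i-q_j)$ it produces near collisions) matches the paper's $I_5$ computation. Two remarks on the construction itself: the paper's correction is \emph{global}, with no cutoffs $\chi_{ij}$ needed, because $|(p_i-p_j)\cdot(q_i-q_j)|/|q_i-q_j|\le|p_i-p_j|$ is automatically bounded by momenta and hence $o(H)$; and the negative quantity the correction produces is $-\tfrac{b}{N^2}|q_i-q_j|^{1-d}$, which is strictly \emph{more} singular than the Coulomb self-energy $|q_i-q_j|^{2-d}$ — it must be, in order to dominate the bad $+c|q_i-q_j|^{2-d}$ term.

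The genuine gap is in the many-body step, exactly where you write that the remaining terms ``should be absorbable.'' When you compute $-\nabla_\bq U\cdot\nabla_\bp\Psi$, the singular force from pair $(i,j)$ pairs not only with its own correction direction but with the correction directions of every other pair $(i,k)$, $k\neq j$, producing cross terms
$$
\sum_{i}\Big(\sum_{j\neq i}\frac{q_i-q_j}{|q_i-q_j|^{d}}\Big)\cdot\Big(\sum_{k\neq i}\frac{q_i-q_k}{|q_i-q_k|}\Big)
$$
whose off-diagonal contributions are each of order $|q_i-q_j|^{1-d}$ — the same order as the good diagonal terms — and have no a priori sign. If three or more particles cluster simultaneously, nothing in your argument prevents these cross terms from overwhelming the diagonal dissipation; localizing with cutoffs does not help, since several $\chi_{ij}$ can be active at once. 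The paper resolves this with Lemma \ref{lem:bdT}: the full sum is bounded below by its diagonal part $\sum_{i\neq j}|q_i-q_j|^{1-d}$, proved by grouping the off-diagonal terms over unordered triples $\{i,j,k\}$ and showing each triple's contribution is nonnegative via the elementary fact that at most one angle of a triangle is obtuse (so the cosine weights attached to the two shortest sides are positive). Without this lemma, or a substitute for it, the Lyapunov inequality is not established for $N\ge 3$, and that is where the proposal falls short of a proof.
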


\begin{rem}
The function $W$ in the above statement is in fact the Lyapunov function to be constructed below.  Theorem \ref{thm:ergodicity} implies an exponential convergence to the equilibrium of the Markov semigroup $P_t$ for almost optimal observables with sharp integrability. Indeed, first Theorem \ref{thm:ergodicity}
states that \eqref{eq:expdecay} holds with any initial condition $\nu$ satisfying
$$
\int_\X e^{aH(\bx)} \nu (d\bx) < \infty.
$$
Furthermore, consider $\nu = \delta_\bx$ and a test function $\varphi(x) :\X\gt\R$ with $|\varphi|_W < \infty$. Then \eqref{eq:expdecay} translates into
$$
|P_t \varphi(\bx) - \pi \varphi| \leq (1 + W(x))e^{-\lambda t} \rho_W(\nu, \pi) \leq C(1 + W(x))e^{-\lambda t}.
$$
One can not expect this hold when $a\geq \beta$ since in this case
there exist test functions  such that $\pi \varphi = \infty$ while
$\sup_\bx |\varphi(\bx)|/(e^{a H(\bx)}) < \infty$.
\end{rem}

The proof of Theorem \ref{thm:ergodicity} is based on
two ingredients:
  a local minorization condition on
  the probability density $P_t(x, \cdot)$ and the existence of a suitable Lyapunov function.
  The proof of the former is quite standard and follows from the support theorem \cite{stroock1972support, stroock1973probability}.
  This relies on the fact that diffusion of consideration here is hypoelliptic so that the probability density $P_t(x, \cdot)$
  is smooth by the H\"ormander’s theorem \cite{hormander1967hypoelliptic}.

The second ingredient, i.e.  the existence of a suitable Lyapunov is established in Proposition \ref{prop:Lyapunov}.
Its proof requires a more delicate analysis and is the major contribution of the present paper; see the next two sections.
The complete proof of the Theorem \ref{thm:ergodicity} can be found in Section \ref{sec:proof}.

\begin{prop}\label{prop:Lyapunov}
For any $a\in (0,\beta)$, there exists $W, \Psi\in C^2(\X, (0,\infty))$ satisfying the following:

(i) $W(\bx) = e^{a H(\bx ) + \Psi(\bx)}$ and $\Psi(\bx) = o(H(\bx))$ as $H(\bx) \gt\infty$. As a consequence, $W(\bx) \gt \infty$ as $H(\bx) \gt\infty$.

(ii) There exists $\lambda, C>0$ depending on $N$ such that
\be\label{ineq:L}
\mL W(\bx) \leq -\lambda W(\bx) + C.
\en
\end{prop}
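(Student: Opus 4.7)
The strategy is to seek $W$ of the exponential form $W=e^{aH+\Psi}$, so that
\[
\frac{\mL W}{W}=a\mL H+\mL\Psi+\gamma\beta^{-1}\bigl|a\bp+\nabla_\bp\Psi\bigr|^{2},
\]
and to design $\Psi$ so that the right hand side is bounded above by a negative constant outside a compact subset of $\X$. A direct computation with $H=\tfrac12|\bp|^{2}+U(\bq)$ gives $\mL H=-\gamma|\bp|^{2}+\gamma\beta^{-1}Nd$, whence the pure Hamiltonian contribution reads
\[
a\mL H+\gamma\beta^{-1}a^{2}|\bp|^{2}=-a\gamma\Bigl(1-\tfrac{a}{\beta}\Bigr)|\bp|^{2}+a\gamma\beta^{-1}Nd,
\]
which is strictly negative in $\bp$ because $a\in(0,\beta)$ but blind to $\bq$ and to the Coulomb singularities. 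The role of $\Psi$, subject to the constraint $\Psi=o(H)$ (which moreover guarantees $W\to\infty$ as $H\to\infty$), is to repair both deficiencies.

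\textbf{Confining correction.} For the large $|\bq|$ regime I would add, in the spirit of the classical $pq$ trick of \cite{mattingly2002ergodicity}, a term of the form $\Psi_{\mathrm{c}}=\alpha\,\zeta(\bp,\bq)\,\bp\cdot\bq/N$ with $\alpha>0$ small and $\zeta$ a smooth prefactor that tames the quadratic growth of $\bp\cdot\bq$ enough to make $\Psi_{\mathrm{c}}=o(H)$. The dominant piece of $\mL\Psi_{\mathrm{c}}$ is $-\alpha\zeta\,\bq\cdot\nabla U/N$, whose external part is bounded above by $-\frac{\alpha c_2\zeta}{N}\sum_i V(q_i)$ up to an additive constant by Assumption~\ref{ass:v}(ii), furnishing the dissipation missing in the $\bq$-direction. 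The interaction part of $-\bq\cdot\nabla U$ reduces for Coulomb to a constant multiple of the interaction energy, which for $\alpha$ small is absorbed by the singular correction introduced next; the positive terms $\alpha\zeta|\bp|^{2}/N$ and $\gamma\beta^{-1}|\nabla_\bp\Psi_{\mathrm{c}}|^{2}$ are controlled by the negative $|\bp|^{2}$ coefficient above. Assumption~\ref{ass:v}(iii) is used here to keep the cross terms between $V$ and the other ingredients of $\Psi$ small.

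\textbf{Singular correction.} The genuine obstacle is the neighborhood of the collision set $\{q_i=q_j\}$: the Coulomb kernel $K$ diverges like an inverse power of the separation while $\nabla K$ diverges one order faster, so $\gamma\beta^{-1}|\nabla_\bp\Psi|^{2}$ can dominate every dissipative term on approach to a collision unless $\Psi$ is tailored to the singular geometry. Following the rescaling philosophy of \cite{herzog2017ergodicity} and the two-body analysis to be presented in Section~\ref{sec:model1}, I would add a sum of pairwise corrections of the schematic form
\[
\Psi_{\mathrm{s}}(\bq,\bp)=\sum_{i\ne j}\phi\bigl(K(q_i-q_j)\bigr)\,\psi\bigl((p_i-p_j)\cdot\nabla K(q_i-q_j)\bigr),
\]
with $\phi$ sublinear and $\psi$ of controlled derivative, calibrated so that (a) $\Psi_{\mathrm{s}}=o(H)$; (b) the term $\gamma\beta^{-1}|\nabla_\bp\Psi_{\mathrm{s}}|^{2}$ stays subdominant to the other dissipative contributions; and (c) the cross term $-\nabla U\cdot\nabla_\bp\Psi_{\mathrm{s}}$ produces a strongly negative contribution proportional to $|\nabla K(q_i-q_j)|^{2}$ at every binary near-collision. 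Requirement (c) is what injects position dissipation on the singular set, and the simultaneous fulfilment of (a)--(c) is exactly what forces the Herzog--Mattingly-type scalings. The passage from the pair model to the many body system (Section~\ref{sec:model2}) then relies on the fact that three-body effects between a colliding pair and a far-away particle are subleading at the relevant scaling.

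\textbf{Verifying \eqref{ineq:L}.} With $\Psi=\Psi_{\mathrm{c}}+\Psi_{\mathrm{s}}$ I would close the argument by a case analysis on $\X$ minus a compact set of the form $\{H\le R,\ \min_{i\ne j}|q_i-q_j|\ge\delta\}$: (i) when $|\bp|$ is large, the $-(1-a/\beta)|\bp|^{2}$ coefficient wins; (ii) when $|\bq|$ is large with no near-collision, the confining correction wins via Assumption~\ref{ass:v}(ii); (iii) when two particles are near a collision with moderate $|\bp|$, the singular correction wins via (c); and (iv) in the combined regime, (i) and (b) jointly suffice. On the compact remainder, $\mL W$ is continuous hence bounded, and absorbed into $C$. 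The property $\Psi=o(H)$ converts the pointwise estimate $\mL W/W\le -\lambda'$ into $\mL W\le -\lambda W+C$ with a slightly smaller $\lambda>0$ and simultaneously ensures $W\to\infty$ whenever $H\to\infty$. The main obstacle, by a wide margin, is the calibration of $\phi$ and $\psi$ in $\Psi_{\mathrm{s}}$ so that (a)--(c) hold uniformly in $d\ge 2$ and uniformly across all binary collisions of the $N$-body system; this is precisely what Sections~\ref{sec:model1}--\ref{sec:model2} must carry out explicitly.
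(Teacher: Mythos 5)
Your overall architecture (exponential ansatz $W=e^{aH+\Psi}$, a small $\bp\cdot\bq$ confining term, a pairwise singular corrector, and a case analysis outside a compact set) is the same as the paper's, but the proposal has two genuine gaps at exactly the points where the proof is hard.

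First, the singular corrector is never actually produced, and the desiderata you impose on it are in tension as stated. You ask simultaneously that $\gamma\beta^{-1}|\nabla_\bp\Psi_{\mathrm{s}}|^{2}$ be subdominant (b) and that the cross term $-\nabla_\bq U\cdot\nabla_\bp\Psi_{\mathrm{s}}$ be negative of order $|\nabla K(q_i-q_j)|^{2}$ (c). Since the singular part of $-\nabla_\bq U\cdot\nabla_\bp\Psi_{\mathrm{s}}$ is of order $|\nabla K|\,|\nabla_\bp\Psi_{\mathrm{s}}|$, achieving (c) forces $|\nabla_\bp\Psi_{\mathrm{s}}|\sim|\nabla K|$, which makes the It\^o term in (b) of the same order $|\nabla K|^{2}$ as the gain, so the sign of the sum hinges on an unspecified calibration; moreover the transport term $\bp\cdot\nabla_\bq\Psi_{\mathrm{s}}$ then involves $\nabla^{2}K\sim|q|^{-d}$ multiplied by momenta and is not controlled. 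The paper's resolution is different and sharper: it takes $\Psi_{\mathrm{s}}=-\frac{b}{N}\sum_{i\neq j}(p_i-p_j)\cdot\frac{q_i-q_j}{|q_i-q_j|}$, i.e.\ momenta paired with \emph{unit} vectors, so that $\nabla_\bp\Psi_{\mathrm{s}}$ is uniformly bounded (killing (b) for free), the transport term is non-positive by Cauchy--Schwarz, and the cross term yields $-\frac{b}{2N^{2}}\sum_{i\neq j}|q_i-q_j|^{1-d}$ --- only \emph{one} power of $|\nabla K|$, which suffices precisely because everything else near a collision is bounded. Your schematic $\phi(K)\psi(\cdot)$ family does not single this choice out, and the $d=2$ case (where the dominant rescaled generator also contains the transport part and the Herzog--Mattingly admissibility condition fails) is not engaged at all.

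Second, your dismissal of the many-body interactions --- ``three-body effects between a colliding pair and a far-away particle are subleading'' --- is false as stated and hides the one genuinely nontrivial $N$-body step. Expanding $-\nabla_\bq U\cdot\nabla_\bp\Psi_{\mathrm{s}}$ produces, for each $i$, cross terms
\begin{equation*}
\sum_{j\neq i}\sum_{k\neq i,\,k\neq j}\frac{(q_i-q_j)\cdot(q_i-q_k)}{|q_i-q_j|^{d}\,|q_i-q_k|},
\end{equation*}
which are of the \emph{same} order as the diagonal gain $|q_i-q_j|^{1-d}$ whenever $|q_i-q_k|$ is comparable to $|q_i-q_j|$ (e.g.\ near-triple collisions), and which have no a priori sign. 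The paper's Lemma~\ref{lem:bdT} shows by an explicit triangle-angle argument (at most one angle of a triangle is obtuse, and the weighted cosine sum is non-negative) that these cross terms are collectively non-negative, so the full sum is bounded below by the diagonal part. Without this lemma or a substitute, your step (iii) of the case analysis does not close.
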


\section{Construction of Lyapunov function: Building intuition from a simple example}\label{sec:model1}
In this section, we would like to build some intuition of constructing a Lyapunov function
for \eqref{eq:LG} through  examining carefully the structure of a simple two-body Coulomb system. To be more concrete, we consider a simple system of Coulomb gases with only two particles ($N=2$ in \eqref{eq:LG}). To simplify the dynamics even further,
we artificially freeze one of the particle at the origin by setting $q_1 = p_1 = 0$ and only study the dynamics of $(q,p) = (q_2,p_2)\in \X =  (\R^{d})^2\setminus (0,0)$ with $d\geq 2$:

\be\label{eq:ex}
\begin{aligned}
		\frac{dq}{dt} & = p\\
		\frac{dp}{dt} & = -\gamma p - \nabla U(q) + \sqrt{2\gamma \beta^{-1}}\frac{dB}{dt},
		\end{aligned}
\en
with $U(q) =  V(q) + K(q)$. Recall the Coulomb kernel $K$ in
\eqref{eq:Coulomb}. In this case, the generator $\mathcal{L}$ associated to above is
$$
\mL = p\cdot \nabla_q  - \nabla_q U(q) \cdot \nabla_p -\gamma p\cdot \nabla_p +
\gamma\beta^{-1} \Delta_p =: \mH + \gamma \mO,
$$
where $\mH$ is the conservative part (corresponding to the conservative Hamiltonian transport dynamics)
and $\mO$ is the dissipative part  (corresponding to the fluctuation/dissipation of the Orstein-Uhlenbeck diffusion process on the momentum):
$$
\mH = p\cdot \nabla_q  - \nabla_q U(q) \cdot \nabla_p  \text{ and }
\mO = -  p\cdot \nabla_p + \beta^{-1} \Delta_p.
$$

\subsection{Revisting old ideas}
An initial natural guess of the Lyapunov function $W$ would simply be the Hamiltonian
$H(q, p) = \frac{1}{2} |p|^2 + V(q) + K(q)$. However, a simply calculation yields that
\be\label{eq:LH}
\mL H = -\gamma |p|^2 + d\gamma \beta^{-1}.
\en
According to the assumptions on $V$ and $K$, the functions $p \gt |p|^2$ and $(q, p)\gt H(q,p)$ are not comparable
when either $|p|\gt \infty$  or $|p| \gt 0$ while $p$ remains bounded.
Consequently, the function $|p|^2$ can not be bounded from below by $H$ and hence the inequality \eqref{eq:Lyapunov} fails to hold.
This is mainly because the dissipative operator $\mO$ only acts in the $p$-direction.
To transfer the dissipation from $p$ to $q$ one needs to modify the Lyapunov function from $W = H$ to something else.

In the case where $U(q)$ has no singularity and grows at least quadratically at infinity,
a Lyapunov function of the form
$$
W(q,p) = H(q,p) + c p\cdot q
$$
 was constructed previously (see e.g \cite{talay2002stochastic, mattingly2002ergodicity}). By choosing $c>0$ small enough,
 one can justify that the function $H(q,p)$ and $W(q,p)$ defined above are comparable when $|p|^2 + |q|^2 \gt\infty$.
 The trick of adding a $c p\cdot q$ term is essential to the proof of hypercoercivity of kinetic equations
 \cite{dric2009hypocoercivity, dolbeault2015hypocoercivity} which shows that the generator is coercive with respect to some modified norms that are equivalent to $H^1$ or $L^2$ (weighted against the invariant measure $\pi$) although it is not coercive with respect to the corresponding standard norms.

 Why adding a correction term $c p\cdot q$ term with $H$ leads to a Lyapunov function
 was explained in detail in \cite{cooke2017geometric, hairer2009slow}. The key observation there is that at high energy level ($H \gg 1$), the
 system \eqref{eq:ex} evolves approximately around the deterministic orbit of the Hamiltonian dynamics ($\gamma  = 0$ in \eqref{eq:ex}).
 Therefore the average dissipative effect  is approximately given by
 $$
 - \gamma \langle |p|^2 \rangle (H(q, p)) + d\gamma \beta^{-1}
 $$
 where $\langle |p|^2 \rangle (h)$ is the average of $|p|^2$ along the deterministic Hamiltonian orbits at energy level $h$.
To correct $-|p|^2 + d\gamma \beta^{-1}$ on the right side of
\eqref{eq:LH} to the average dissipation as above, a Lyapunov function of the form $W = H + \Psi$ was constructed where
the corrector $\Psi$ solves the following Poisson equation:
\be\label{eq:poisson}
\mH \Psi (q, p) = -\gamma (|p|^2  - \langle |p|^2 \rangle (H(q, p)) ).
\en
When $U(q)$ has no singularity and grows at least quadratically at infinity, it was shown that $\Psi(q,p) \sim cp\cdot q$ as $|q| \gt \infty$.
The same averaging idea was further exploited by \cite{cooke2017geometric} to construct Lyapunov function for the Langevin dynamics with a Lennard-Jones type singularity
\be\label{eq:LJ}
U(q) = A|q|^{a} + \frac{B}{|q|^{b}}
\en
 for some constants $a, b > 0$. In this case, the  operator $\mH$ used in the Poisson equation \eqref{eq:poisson}
 to define the corrector has to be replaced by another effective operator to take account of the blow-up of $H$ as $q \gt 0$; see \cite[Section 4]{cooke2017geometric} for more details.
 However, the idea of reducing stochastic dynamics to the deterministic Hamiltonian dynamics
 seems to be difficult to be generalized to more complicated singular potentials $U(q)$,
 such as \eqref{eq:U}.

 Recently, the authors of \cite{herzog2017ergodicity}
 proposed a different idea to construct a Lyapunov function and used it to prove the
 geometric ergodicity of Langevin dynamics with a larger class of admissible singular potentials $U$. More specifically,
 their admissible condition on $U$ states that   for any sequence $\{q_k \}\subset \mD$ with $U(q_k) \gt\infty$,
 \be\label{eq:adm}
 |\nabla U(q_k)| \gt \infty  \text{ and } \frac{|\nabla^2 U(q_k)|}{|\nabla U(q_k)|^2} \gt 0.
 \en
Note that this condition is fullffiled by both the Lennard-Jones type
potential defined in \eqref{eq:LJ} and the Coulomb potential in $\R^d$
with $d\geq 3$. However, the second limit fails to hold for two
dimensional Coulomb potential and the Log potential in one dimension.  Like in our the present paper, the choice of Lyapunov function in
\cite{herzog2017ergodicity} was inspired by the dominating scaling at
the singularity of the kernel (see the discussion below) and the
scaling analysis which inspired the hypocoercivity constructions in \cite{dolbeault2015hypocoercivity}. However, the choice made in this paper is able to treat all Coulomb potentials in $\R^d$ for any dimension $d\geq 2$ uniformly. Moreover, the new Lyapunov function constructed here satisfies a Lyapunov condition with constants that have explicit dependence in $N$ (see Remark \ref{rem:Ndep}). The latter may be helpful for studying  scaling limit  problems as $N\gt\infty$. A more geometric understanding of why this new Lyapunov function is better still eludes us.

  Since our construction of Lyapunov function for the Coulomb system follows closely that of \cite{herzog2017ergodicity}, it is worth to briefly review the techniques therein.   Their construction relies on the following key observation (which was described earlier in \cite{dolbeault2015hypocoercivity}):
 under the following parabolic scaling
 $$
 (t, q, p) \gt (\lambda^{-2}t,\lambda^{-1}q, p )
 $$
 as $\lambda \gt 0$, the dynamics \eqref{eq:ex} formally reduces to its overdamped limit
 $$
 \frac{dq}{dt} = -\nabla U(q) + \sqrt{2\beta^{-1}}\frac{dB}{dt}.
 $$
  when the potential $U(q)$ is also  properly scaled up to infinity as $|q|\gt \infty$ or $|q|\gt 0$.
  This observation is nice because the overdamped Langevin dynamics above has a natural Lyapunov function, namely the potential $U(q)$.
 As a result, effectively it suffices to consider building a Lyapunov function by focusing on the regime where $U$ (instead of $H$) is large while $p$ remains bounded.
 Based on such observation, the authors of \cite{herzog2017ergodicity}
  proposed a Lyapunov function of the form
  \be\label{eq:W1}
  W(q,p) = e^{b(H(q,p) + \Psi(q,p))} ,
  \en
  where $b > 0$ and $\Psi(q,p) = o(H(q,p))$ as $H(q,p) \gt\infty$.
 With this definition, assuming $\Psi \in C^2(\X)$, one obtains that
  \be\label{eq:LW1}
  \mL W = bW (\mL (H(q,p) + \Psi(q,p)) + b\gamma\beta^{-1} |\nabla_p (H(q,p) + \Psi(q,p))|^2).
  \en
  In order to let $W$ satisfy the Lyapunov condition \eqref{eq:Lyapunov}, the corrector $\Psi$ has to be chosen
  appropriately in order to gain dissipation in $q$ when
  $U(q)$ is large. In particular, it is necessary to have for some $C>0$
  \be\label{eq:Lpsi}
  \mL \Psi \leq -C  \text{ when } U \geq R \gg 1.
  \en
    For this end, one need to consider scaling large $U$ in two different routes: $|q|\gt \infty$ and $|q| \gt 0$.
  For the Lennard-Jones potential defined in \eqref{eq:LJ}, it was shown
  that in both scaling regimes the dominating term in the generator is given by the following operator
  $$
  \A  = -\nabla U(q)\cdot \nabla_p.
  $$
  Hence effectively the condition \eqref{eq:Lpsi} becomes
  $$
   \A \Psi = -\nabla U(q)\cdot \nabla_p\Psi   \leq -C  \text{ when } U \geq R \gg 1.
  $$
  A typical choice of $\Psi$ suggested by \cite{herzog2017ergodicity} is a function $\Psi \in C^2(\X)$  and
  $$
  \Psi = \kappa \frac{p\cdot \nabla U(q)}{|\nabla U(q)|^2}  \text{ on } \X \cap \{U \geq R\}
  $$
  for some large constant $\kappa > 0$ which leads to $\A \Psi = -\kappa$ on $ \X \cap \{U \geq R\}$. Therefore on $ \X \cap \{U \geq R\}$, it holds that
  \be\label{eq:bdLH}
  \bea
&   \mL (H + \Psi) + b\gamma \beta^{-1}|\nabla_p (H(q,p) + \Psi(q,p))|^2 \\
& = \mL H + \A \Psi + (\mL - \A)\Psi +  b\gamma \beta^{-1}|\nabla_p (H(q,p) + \Psi(q,p))|^2  \\
  & = -\gamma |p|^2 + d \gamma \beta^{-1} - \kappa - p\cdot \nabla_q \Psi(q,p) -\gamma p\cdot \nabla_p \Psi(q,p) + b\gamma\beta^{-1} |p + \nabla_p \Psi(q,p)|^2\\
  & = -\gamma (1  - b\beta^{-1}) |p|^2 + d \gamma \beta^{-1} - \kappa +\kappa ( 2b\gamma\beta^{-1} - 1) p^T \nabla_q \Big(\frac{ \nabla U(q)}{|\nabla U(q)|^2} \Big) p\\
  & \qquad - \gamma \kappa \frac{p\cdot \nabla U(q)}{|\nabla U(q)|^2} + \frac{b\gamma\beta^{-1}\kappa^2}{|\nabla U(q)|^2} \\
  & \leq -\underbrace{\Big(\gamma(1  - b\beta^{-1} )- \kappa | 2b\gamma\beta^{-1} - 1| \Big|\nabla_q \Big(\frac{ \nabla U(q)}{|\nabla U(q)|^2} \Big) \Big|\Big)}_{=:\delta} |p|^2 + d \gamma \beta^{-1}- \kappa \\
  & \qquad + \frac{\delta}{2} |p|^2 +
     \frac{\gamma^2\kappa^2}{2\delta |\nabla U(q)|^2}+ \frac{b\gamma\beta^{-1}\kappa^2}{|\nabla U(q)|^2}\\
     & = -\frac{\delta}{2} |p|^2 + d \gamma \beta^{-1}- \kappa +
     \Big(\frac{\gamma^2\kappa^2}{2\delta} + b\gamma\beta^{-1} \kappa^2\Big) \frac{1}{|\nabla U(q)|^2},
  \ena
  \en
  where the inequality above follows from the Young's inequality.
  Now we fix $\kappa = 3d \gamma   \beta^{-1}$ and $b< \beta^{-1}$. Then by condition \eqref{eq:adm},
  there exists $R > 0$ large enough such that  $\delta > 0$ and that
  $$
  \Big(\frac{\gamma^2\kappa^2}{2\delta} + b\gamma\beta^{-1} \kappa^2\Big) \frac{1}{|\nabla U(q)|^2} \leq  d\gamma \beta^{-1} \text{ on } \X \cap \{U \geq R\},
  $$
  which implies that
   $$
   \bea \mL (H + \Psi) + b\gamma \beta^{-1}|\nabla_p (H(q,p) + \Psi(q,p))|^2 \leq  -d \gamma \beta^{-1} \text{ on } \X \cap \{U \geq R\}.
   \ena
   $$
   On the other hand,  on $ \X \cap \{U \leq R\}$, the right side of \eqref{eq:bdLH} is bounded. By combining  the above estimates and \eqref{eq:LW1}, one sees that the function  $W$  defined in \eqref{eq:W1} satisfies the Lyapunov condition \eqref{eq:Lyapunov}. Again this relies on  the crucial assumption \eqref{eq:adm} on $U(q)$. However, this assumption fails for two dimensional Coulomb potential or one dimensional Log potential.

\subsection{Lyapunov function for the two-body system} We now build a new Lyapunov function for the  two-body system \eqref{eq:ex} with a Coulomb potential $K$ in $\R^d$ for any $d\geq 2$.  Similar to the previous discussion, we
would like to consider scalings for $q$ at large $U$. Since the main issue comes from the singularity of
the kernel, we first restrict our attention on the regime $|q| \gt 0$. In particular, we consider the scaling
$$
(q, p) = (\lambda^{-1} Q, P)
$$
with $\lambda \gt \infty$. Then the generator $\mL$, when rewritten in the $(Q, P)$-coordinates,
takes the form
\be
\bea
\mL_{(Q, P)}& = -\lambda P \cdot \nabla_Q - (\nabla V(\lambda^{-1} Q) + \nabla K (\lambda^{-1} Q) ) \cdot \nabla_P - \gamma P \cdot \nabla_P + \gamma\beta^{-1} \Delta_P\\
 & \approx \begin{cases}
           \lambda P\cdot \nabla_Q  + \lambda \frac{Q}{|Q|^2}\cdot \nabla_P & \text{ if } d=2,\\
            \lambda^{d-1} \frac{Q}{|Q|^d}\cdot \nabla_P & \text{ if } d \geq 3,
           \end{cases}
\ena
\en
where in the last line we have used the assumption that $V(q)\in C^2(\R^d)$ and hence finite when $|q|$ is bounded and that the gradient of the Coulomb kernel is
$
\nabla K (q) = \frac{q}{|q|^d}
$
for any  $d\geq 2$. Therefore the dominating term of the generator in the regime $|q| \gt 0$ is
\be\label{eq:L2}
\A = \begin{cases}
      p\cdot \nabla_q + \frac{q}{|q|^2} \cdot \nabla_p & \text{ if } d=2,\\
      \frac{q}{|q|^d}\cdot \nabla_p & \text{ if } d \geq 3.
     \end{cases}
\en
Motivated by this, we define a Lyapunov function similar to \eqref{eq:W1}:
\be\label{eq:W2}
W(q, p) = \exp(a H(q,p)+ \Psi(q,p))
\en
with $a  > 0$. We would like to pick $\Psi$ so that $\mL \Psi \approx \A \Psi$ becomes very negative when $U(q)\gt\infty$ in the route $|q|\gt0$.
In view of the prefactors in front of the derivation operators in \eqref{eq:L2}, it is natural to consider $\Psi$ with the property that
\be\label{eq:psi1}
\Psi(q,p) \approx -b\,\frac{p\cdot q}{|q|^\alpha} \text{ as } |q| \gt 0
\en
for some constants $b,\alpha > 0$.
In fact, with this choice, we have from \eqref{eq:L2} that
$$
\mL \Psi(q,p) \approx  \A \Psi =\begin{cases}
                                 -\frac{b}{|q|^{\alpha+2}}(|p|^2 |q|^2 - \alpha |p\cdot q|^2)  -b \frac{1}{|q|^\alpha} & \text{ if } d=2, \\
                                 - \frac{b}{|q|^{d-2+\alpha}}& \text{ if } d \geq 3.
                                \end{cases}
$$
In addition, by setting $0 < \alpha \leq 1$, one sees that the first term on the right side of above in the case $d =2$ is always
non-positive and hence obtains that
$$
\mL \Psi(q, p) \lesssim -\frac{1}{|q|^{d-2+\alpha}} \text{ as } |q| \gt 0.
$$
In the subsequent discussion, we adopt the choice $\alpha=1$ for the purpose of simplifying
calculations.

To obtain similar behavior about $\mL \Psi$
in the other regime of large $U(q)$, namely $|q| \gt \infty$, we simply add a $cp\cdot q$ term in our $\Psi$
since by the discussion in the last section this term gives extra dissipation in $q$-direction when $|q|$ is large.
To sum up, the considerations above lead to the following new choice of corrector
\be\label{eq:psi2}
\Psi(q,p) =- b \,  \frac{p\cdot q}{|q|} + c\, p\cdot q.
\en
Notice that we have set $\alpha = 1$ in \eqref{eq:psi1}.
With this $\Psi$, we verify that the function $W$ defined \eqref{eq:L2} indeed
satisfies the Lyapunov condition \eqref{eq:Lyapunov} with appropriate parameters $a,b,c$.
Indeed, recall the Hamiltonian $H(q, p) = \frac{1}{2}|p|^2 + V(q) + K(q) $. By definition,
$$
|\Psi (q, p) | \leq b |p| + c|p||q| \text{ for every } (q,p)\in \X.
$$
In view of the growth condition Assumption \ref{ass:v} (ii) on $V(q)$, it is clear that when $c > 0$ is sufficiently small,
$
\Psi(q,p) = o(H(q,p))
$
as  $H \gt \infty$ and thus $W(q,p) = \exp(H(q,p)(1 + o(1)))$.
This in particular implies that $W \gt \infty$ as $H\gt \infty$.

Next we show that $W$ satisfies the inequality \eqref{eq:Lyapunov}.  A straightforward calculation yields
$$
\bea
\frac{\mL W(q,p)}{W(q,p)} & = p \cdot  \nabla_q \Psi(q,p) -\gamma p \cdot  (a \nabla_p H(q,p) + \nabla_p \Psi(q,p))\\
& - \nabla_q U(q) \cdot\nabla_p \Psi(q,p) + \gamma\beta^{-1} |a \nabla_p H(q,p) + \nabla_p \Psi(q,p)|^2 + da\gamma \beta^{-1}.
\ena
$$

To evaluate the terms appearing in the above, we need the following ansatze:

$$
\bea
\nabla_q H(q, p) & = \nabla_q U(q) = \nabla V(q) - \frac{q}{|q|^d},\qquad \nabla_p H(q, p)  = p,\\
\nabla_q \Psi(q, p)& =  -b\frac{p}{|q|} + b(p\cdot q)\frac{q}{|q|^3} + cp,\qquad \nabla_p \Psi(q, p) = -b\frac{q}{|q|} + cq.
\ena
$$
Consequently,
\ben
\begin{aligned}
& \frac{\mL W(q,p)}{W(q,p)}  = -\frac{b}{|q|^3}\big( |p|^2|q|^2 - (p\cdot q)^2\big) + c|p|^2  - a\gamma |p|^2 + b\gamma \frac{p\cdot q}{|q|} -c\gamma p\cdot q\\
 & \qquad - \big(-b\frac{\nabla V(q) \cdot q}{|q|} + c  \nabla V(q)\cdot q + \frac{b}{|q|^{d-1}} - \frac{c}{|q|^{d-2}}\big) \\
 & \qquad + \gamma\beta^{-1}\big(a^2|p|^2 - 2ab \frac{p\cdot q}{|q|} + 2ac p\cdot q+ b^2 - 2bc|q| + c^2|q|^2 \big)+ d\gamma\beta^{-1}\\
  & = \underbrace{-\frac{b}{|q|^3}\big( |p|^2|q|^2 - (p\cdot q)^2\big)}_{I_1} + \underbrace{(c - a\gamma (1 - a\beta^{-1})) |p|^2 + (b\gamma - 2ab\gamma\beta^{-1}) \frac{p\cdot q}{|q|}}_{I_2}  \\
  & \qquad + \underbrace{\Big(\frac{b}{|q|} - c\Big) \nabla V(q) \cdot q}_{I_3} + \underbrace{(2ac\gamma\beta^{-1} - c\gamma)p\cdot q  - 2bc |q| + c^2 |q|^2}_{I_4}\\
  &\qquad \underbrace{- \Big(\frac{b}{|q|^{d-1}} - \frac{c}{|q|^{d-2}}\Big) + b^2\gamma\beta^{-1} + da\gamma\beta^{-1}}_{I_5}.\\
\end{aligned}
\enn
Let us fix $a\in (0, \beta) $ and $ b > 0$. Then
$
I_1 \leq 0
$.
Moreover, if $c < - a\gamma (1 - a\beta^{-1})/2$, then
$$
I_2 \leq \frac{- a\gamma (1 - a\beta^{-1})}{2} |p|^2 + |b\gamma -2ab\gamma\beta^{-1}||p|.
$$
 In addition, thanks to Assumption \eqref{ass:v}, we have
 $$
\bea
I_3 &= \Big(\frac{b}{|q|} - c\Big) \nabla V(q) \cdot q \ (\mathbf{1}_{|q| > \frac{2b}{c}} + \mathbf{1}_{|q| \leq \frac{2b}{c}}) \\
& \leq -\frac{c\cdot c_3}{2} |q|^2 + \frac{c\cdot c_4}{2} + \max_{|q|\leq \frac{2b}{c}} (b |\nabla V(q)| + c|q||\nabla V(q)|).
 \ena
 $$
 By Young's inequality, it holds for any $\eps > 0$ that
$$
\bea
I_4
\leq \eps |p|^2 + \frac{c^2\gamma^2 |2a\beta^{-1} -1|^2 }{4\eps} |q|^2 +2c^2|q|^2 + b^2 .
\ena
$$
Lastly, one has
$$\bea
I_5 & = - \Big(\frac{b}{|q|^{d-1}}  - \frac{c}{|q|^{d-2}}\Big)(\mathbf{1}_{|q| < \frac{b}{2c}} + \mathbf{1}_{|q| \geq \frac{b}{2c}}) + b^2\gamma\beta^{-1} + d\gamma\beta^{-1}\\
& \leq -\frac{b}{2|q|^{d-1}} + c\,\Big(\frac{2c}{b}\Big)^{d-2} + b^2\gamma\beta^{-1} + da\gamma\beta^{-1}.
\ena
$$
From above, one sees that if one first sets $\eps < \frac{a\gamma (1 - a\beta^{-1})}{2} $ and then
chooses $c>0 $ sufficiently small so that the terms involving $|q|^2$  in $I_4$ can be absorbed by those in $I_3$,
then one obtains that
\be\label{eq:LW}
\frac{\mL W}{W} \leq -c_5(|p|^2 + |q|^2) -\frac{c_6}{|q|^{d-1}} + c_7
\en
for some $c_5, c_6, c_7 > 0$. It is clear that
$W(q,p) \gt \infty$ leads to one of the following options: $|p|\gt\infty, |q|\gt\infty$ or $|q|\gt0$. The latter
implies by \eqref{eq:LW} that there exists $\lambda > 0$ such that
$$
\mL W \leq -\lambda W \text{ on } \X\cap \{W(q,p) \geq R\}
$$
when $R$ is sufficiently large. On the other hand, it follows from \eqref{eq:LW} that $\mL W$ is bounded  when  $W (q, p) \leq R $.
 Hence we have shown that there exist constants $\lambda, K > 0$   such that
$$
\mL W \leq -\lambda W + K.
$$
\section{Construction of Lyapunov function: General case}
\label{sec:model2}
This section devotes to the proof of Proposition \ref{prop:Lyapunov}, i.e. building a Lyapunov function
for the dynamics of Coulomb gases with any finite number of particles ($N \geq 2$).
Informed by the analysis of the two-body case in the previous section, we define with constants $b,c>0$ the correction term
\be\label{eq:Psi}
\bea
\Psi(\bq, \bp) & = -\frac{b}{N} \Big(\sum_{i=1}^N p_i \cdot \big(\sum_{j\neq i}^N\frac{q_i - q_j}{|q_i - q_j|}\big)\Big) + c\,\bp\cdot \bq\\
& =   -\frac{b}{N}  \sum_{1\leq i\neq j \leq N} \frac{(p_i - p_j) \cdot (q_i - q_j)}{|q_i - q_j|} + c\, \bp \cdot \bq.
\ena
\en
Note that the function above reduces to \eqref{eq:psi2} when $(\bq,
\bp) = ( (q, 0), (p, 0))$ and $N=2$. Hence, we can understand this
choice as the many-particle analog of model problem presented in Section~\ref{sec:model1}.
We also define with $a > 0$ the Lyapunov function
\be\label{eq:W3}
W(\bq, \bp) = \exp(a H(\bq,\bp)+ \Psi(\bq,\bp)).
\en
By the same reasoning as in the previous section, one sees that when $c>0$ is sufficiently small,
$
\Psi(\bq,\bp) = o(H(\bq, \bp))
$
and hence $W(\bq,\bp) = \exp(aH(\bq,\bp)(1 + o(1)))$. This implies that $W \gt \infty$ as $H\gt \infty$.

Next we continue to show that $W$ satisfies \eqref{eq:Lyapunov}. In fact,
$$
\bea
\frac{\mL W(\bq,\bp)}{W(\bq,\bp)} & = \bp \cdot \nabla_\bq \Psi(\bq,\bp) - \gamma \bp \cdot  (a \nabla_\bp H(\bq,\bp) + \nabla_\bp \Psi(\bq,\bp))\\
& - \nabla_\bq U(\bq) \cdot\nabla_\bp \Psi(\bq,\bp) +  \gamma\beta^{-1} |a \nabla_\bp H(\bq,\bp) + \nabla_\bp \Psi(\bq,\bp)|^2 + Nda\gamma \beta^{-1}.
\ena
$$
We need the following derivatives:
$$
\bea
\partial_{q_i} H(\bq, \bp) & = \partial_{q_i} U(\bq) = \nabla V(q_i) - \frac{1}{N}
\sum_{j\neq i}^N \frac{q_i - q_j}{|q_i - q_j|^d},\\
\partial_{q_i} \Psi(\bq, \bp)& =   -\frac{b}{N} \sum_{j\neq i}^N \Big(\frac{p_i - p_j}{|q_i - q_j|} - \frac{(p_i - p_j)\cdot (q_i - q_j)}{|q_i - q_j|^3} \, (q_i - q_j)\Big) + cp_i,\\
\partial_{p_i} H(\bq, \bp) & = p_i,\\
\partial_{p_i} \Psi(\bq, \bp)& =  -\frac{b}{N} \sum_{j\neq i}^N \frac{q_i - q_j}{|q_i - q_j|} + cq_i.
\ena
$$
It follows  that
\ben
\begin{aligned}
 & \bp \cdot \nabla_\bq \Psi(\bq,\bp)   \\
 & =  -\frac{b}{N}  \sum_{i=1}^N p_i \cdot \Big(\sum_{j\neq i}^N \Big(\frac{p_i - p_j}{|q_i - q_j|} - \frac{(p_i - p_j)\cdot (q_i - q_j)}{|q_i - q_j|^3} \, (q_i - q_j)\Big) \Big) + c|\bp|^2\\
 & =   \underbrace{-\frac{b}{N} \sum_{1\leq i\neq j\leq N}^N \Big(\frac{|p_i - p_j|^2}{|q_i - q_j|} -  \frac{|(p_i - p_j)\cdot (q_i - q_j)|^2}{|q_i - q_j|^3}\Big)}_{\leq 0} +  c|\bp|^2 \\
 & \leq c|\bp|^2,\\
& -\gamma \bp \cdot   (a \nabla_\bp H(\bq,\bp) + \nabla_\bp \Psi(\bq,\bp)) = -\gamma a |\bp|^2 - \frac{\gamma b}{N} \sum_{1\leq i\neq j\leq N}^N \frac{(p_i - p_j)\cdot(q_i - q_j)}{|q_i - q_j|} \\
& \qquad - \gamma c\, \bp \cdot \bq.
\end{aligned}
\enn
Moreover,
\be\label{eq:gupsi}
\begin{aligned}
&- \nabla_\bq U(\bq) \cdot\nabla_\bp \Psi(\bq,\bp)\\
&  = - \sum_{i=1}^N\Big( \nabla V(q_i) - \frac{1}{N}
\sum_{j\neq i}^N \frac{q_i - q_j}{|q_i - q_j|^d}\Big) \cdot \Big(-\frac{b}{N} \sum_{j\neq i}^N \frac{q_i - q_j}{|q_i - q_j|} + cq_i\Big)\\
& =  \frac{b}{N} \sum_{1\leq j\neq i\leq N}^N \frac{(\nabla V(q_i) - \nabla V(q_j))\cdot (q_i - q_j)}{|q_i - q_j|}
- c\sum_{i=1}^N \nabla V(q_i)\cdot q_i \\
&\qquad - \frac{b}{N^2}\sum_{i=1}^N \Big( \sum_{j\neq i}^N\frac{q_i - q_j}{|q_i - q_j|^d}\Big)\cdot\Big( \sum_{k\neq i}^N \frac{q_i - q_k}{|q_i - q_k|}\Big)
  + \frac{c}{N} \sum_{1\leq i \neq j\leq N} \frac{1}{|q_i - q_j|^{d-2}}.\\
\end{aligned}
\en
By Assumption \ref{ass:v}, the first term on the second line of above can be bounded by
$$
\bea
 & 2b\sum_{i=1}^N |\nabla V(q_i)| - c  \sum_{i=1}^N \nabla V(q_i)\cdot q_i \\
 & \leq 2b\sum_{i=1}^N |\nabla V(q_i)| -cc_2\sum_{i=1}^N V(q_i) + cMN\\
 & \leq -\frac{c\,c_2}{2} \sum_{i=1}^N  V(q_i) + N\Big(cM + 2bM_{\frac{cc_2}{4b}} \Big)\\
 & \leq -\frac{c\,c_1c_2}{2} |\bq|^2+ N\Big(cM(1 + \frac{c_1}{2}) + 2bM_{\frac{cc_2}{4b}}\Big),
\ena$$
where we have used \eqref{eq:assv3} with $\eps = \frac{c c_2}{4b}$ in the penultimate line and \eqref{eq:assv1} in the last line above.
Thanks to Lemma \ref{lem:bdT}, the third line of \eqref{eq:gupsi} can be bounded by
$$
\bea
& -\frac{b}{N^2} \sum_{1\leq i\neq j\leq N} \frac{1}{|q_i - q_j|^{d-1}} + \frac{c}{N} \sum_{1\leq i \neq j\leq N} \frac{1}{|q_i - q_j|^{d-2}}\\
& \qquad - \frac{b}{N^2}\sum_{1\leq i\neq j\leq N} \frac{1}{|q_i - q_j|^{d-1}} \Big(1 - \frac{cN}{b} |q_i - q_j|\Big) \Big(\mathbf{1}_{|q_i - q_j| \leq \frac{b}{2Nc}} +  \mathbf{1}_{|q_i - q_j| \geq \frac{b}{2Nc}}\Big)\\
& \leq -\frac{b}{2N^2}\sum_{1\leq i\neq j\leq N} \frac{1}{|q_i - q_j|^{d-1}} +  cN \Big(\frac{2Nc}{b}\Big)^{d-2}.
\ena
$$
It then follow from the last two estimates that
$$
\bea
 & -\nabla_\bq U(\bq) \cdot\nabla_\bp \Psi(\bq,\bp) \\
 & \leq -\frac{c\,c_1c_2}{2} |\bq|^2+ N\Big(cM(1 + \frac{c_1}{2}) + 2bM_{\frac{cc_2}{4b}} + c\Big(\frac{2Nc}{b}\Big)^{d-2}\Big)- \frac{b}{2N^2}\sum_{1\leq i\neq j\leq N} \frac{1}{|q_i - q_j|^{d-1}}.
 \ena
$$
In addition, it holds that
\ben \begin{aligned}
& \gamma\beta^{-1} |a \nabla_\bp H(\bq,\bp) + \nabla_\bp \Psi(\bq,\bp)|^2  = \gamma \beta^{-1}\sum_{i=1}^N \Big|ap_i +  \frac{b}{N} \Big(\sum_{j\neq i}\frac{q_i - q_j}{|q_i - q_j|}\Big) + cq_i\Big|^2\\
 & = \gamma\beta^{-1}\Big( a^2 |\bp|^2 + 2 a\gamma \sum_{i=1}^N p_i\cdot \Big( \frac{b}{N} \Big(\sum_{j\neq i}\frac{q_i - q_j}{|q_i - q_j|}\Big) + cq_i \Big) + \sum_{i=1}^N\Big|\frac{b}{N} \Big(\sum_{j\neq i}\frac{q_i - q_j}{|q_i - q_j|}\Big) + cq_i\Big|^2\Big)\\
 & \leq \gamma\beta^{-1}\Big( a^2 |\bp|^2 + \frac{2ab}{N} \sum_{i\neq j}^N \frac{(p_i - p_j)\cdot(q_i - q_j)}{|q_i - q_j|}
  + 2ac \bp\cdot \bq +  \frac{2b^2}{N^2} \sum_{i=1}^N  \Big|\sum_{j\neq i}\frac{q_i - q_j}{|q_i - q_j|}\Big|^2 + 2c^2|\bq|^2\Big)\\
  & \leq \gamma\beta^{-1}\Big( a^2 |\bp|^2 + \frac{2ab}{N} \sum_{i\neq j}^N \frac{(p_i - p_j)\cdot(q_i - q_j)}{|q_i - q_j|}
  + 2ac\bp\cdot \bq + \frac{2N(N-1)^2b^2}{N^2} + 2c^2 |\bq|^2\Big).
\end{aligned}
\enn
Combining the  estimates above yields
$$
\bea
\frac{\mL W(\bq,\bp)}{W(\bq,\bp)} & \leq
\big(c - \gamma a (1 - a\beta^{-1})\big) |\bp|^2 + \gamma c(2a\beta^{-1} - 1 )\bp \cdot \bq
- c\,\big(\frac{c_1c_2}{2}-2c\big)|\bq|^2\\
&\qquad  + \gamma b|1 - 4a\beta^{-1}|N^{1/2}|\bp|
-\frac{b}{2N^2}\sum_{1\leq i\neq j\leq N} \frac{1}{|q_i - q_j|^{d-1}}\\
& \qquad + N\Big(cM(1 + \frac{c_1}{2}) + 2bM_{\frac{cc_2}{4b}} + c\Big(\frac{2Nc}{b}\Big)^{d-2} + 2b^2\gamma\beta^{-1}\Big)
\\
&\leq \big(c +\eps_1 + \eps_2 - \gamma a (1 - a\beta^{-1})\big) |\bp|^2
+ \Big(\frac{\gamma^2 c^2 |2a\beta^{-1}-1|^2}{4\eps_1} - c\,\big(\frac{c_1c_2}{2}-2c\big)\Big) |\bq|^2\\
& - \frac{b}{2N^2}\sum_{1\leq i\neq j\leq N} \frac{1}{|q_i - q_j|^{d-1}}\\
& + N\Big(cM(1 + \frac{c_1}{2}) + 2bM_{\frac{cc_2}{4b}} + c\Big(\frac{2Nc}{b}\Big)^{d-2} + 2b^2\gamma \beta^{-1} + \frac{\gamma^2b^2 |1-4a\beta^{-1}|^2}{4\eps_2}\Big).
\ena
$$
Now we choose the parameters $a, b, c,\eps_1,\eps_2 > 0$ such that
\be\label{eq:parameter}
\bea
& 0< a < \beta,\\
& b > 0,\\
& c +\eps_1 + \eps_2 - \gamma a (1 - a\beta^{-1}) < 0,\\
& \frac{\gamma^2 c^2 |2a\beta^{-1}-1|^2}{4\eps_1} - c\,\big(\frac{c_1c_2}{2}-2c\big) < 0.
\ena
\en
Note that the last two inequality hold in particular when $\eps_1 + \eps_2 \leq  \gamma a (1 - a\beta^{-1})/2$ and $c$ is sufficiently small.
With above choice of parameters, we have obtained that there exist $\alpha, C= C(N) > 0$ such that
\be\label{eq:LW2}
\frac{\mL W(\bq,\bp)}{W(\bq,\bp)} \leq  -\alpha (|\bq|^2 + |\bp|^2) - \frac{b}{2N^2}\sum_{1\leq i\neq j\leq N} \frac{1}{|q_i - q_j|^{d-1}}
+ C(N).
\en
Observe from the derivarion above that the constant $C(N) = \mathcal{O}(N^{d-1})$.
By the same reasoning as in the previous section, the above inequality implies that there exists $\lambda > 0$ such that when $R \gg 1$,
$$
\mL W \leq -\lambda W \text{ on } \X \cap \{W(\bq, \bp) \geq R\}.
$$
On the other hand, it follows from \eqref{eq:LW2} that $\mL W$ is bounded
on the set $ \X \cap \{W(\bq, \bp) \geq R\}$. To conclude, we have
\be\label{eq:lambdaC}
\mL W \leq -\lambda W  + C
\en
for some constant $C$ depending on $N$.

\begin{rem}\label{rem:Ndep}
  The dependence on $N$ of the constants appearing in the Lyapunov condition above can be made explicit. In fact, let us define   for $\kappa > 0$ the set
  $$
  \mathcal{C}_\kappa := \{(\bq, \bp)\in \X: |\bq|\leq \kappa N^{\frac{d-1}{2}}, |\bp|\leq \kappa N^{\frac{d-1}{2}} \text{ and } \min_{i\neq j} |q_i-q_j| \geq \kappa^{-1} N^{-\frac{d+1}{d-1}} \}.
  $$
  From the estimate \eqref{eq:LW2}, one sees that if $(\bq, \bp)\in \X\setminus \mathcal{C}_\kappa$ with some $\kappa \gg 1$, then
  $$
\mL W\leq -\lambda(N) W
  $$
  holds with $\lambda(N)  = C_\kappa N^{d-1}$ for some $C_\kappa > 0$. However, due to the exponential form of $W$, it follows from \eqref{eq:LW2} that
  $$
\mL W(\bq,\bp) \leq C(N) \max_{(\bq,\bp)\in \mathcal{C}_\kappa} W((\bq,\bp)) \lesssim \mathcal{O} (e^{N^{\alpha + d +2}})\ \forall (\bq, \bp)\in \mathcal{C}
  $$
   with $\alpha > 0$ depending on the growth of $V$. Combining above leads to
  \eqref{eq:lambdaC} with $\lambda = \lambda(N)  = \mathcal{O}(N^{d-1})$ and $C = C(N) =   \mathcal{O} (e^{N^{\alpha + d +2}})$.
\end{rem}

\begin{lem}\label{lem:bdT}
 For any $\bq \in \mathcal{D} = \{\bq = (q_1,\cdots, q_n)^T \in (\R^{d})^N: q_i \neq q_j\ \forall i,j = 1,2\cdots, n \text{ with } i \neq j\}$, the following inequality holds:
 \be\label{eq:Tq}
 J(\bq) := \sum_{i=1}^N \Big( \sum_{j\neq i}^N\frac{q_i - q_j}{|q_i - q_j|^d}\Big) \cdot \Big( \sum_{k\neq i}^N \frac{q_i - q_k}{|q_i - q_k|}\Big) \geq \sum_{1\leq i\neq j\leq N} \frac{1}{|q_i - q_j|^{d-1}} .
 \en
\end{lem}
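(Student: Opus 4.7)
The plan is to expand the triple sum defining $J(\bq)$, identify the diagonal contribution as exactly the right hand side, and then show the off-diagonal remainder is nonnegative by reducing to a triangle inequality. By bilinearity of the inner product,
\begin{equation*}
J(\bq) = \sum_{i=1}^N \sum_{j \neq i} \sum_{k \neq i} \frac{(q_i - q_j) \cdot (q_i - q_k)}{|q_i - q_j|^d \, |q_i - q_k|}.
\end{equation*}
The diagonal terms $j = k$ collapse to $\sum_{i \neq j} |q_i - q_j|^{-(d-1)}$, which is exactly the claimed lower bound. It therefore suffices to show that the off-diagonal piece
\begin{equation*}
S(\bq) := \sum_{\substack{i,j,k \text{ all distinct}}} \frac{(q_i - q_j)\cdot(q_i - q_k)}{|q_i - q_j|^d\, |q_i - q_k|}
\end{equation*}
is nonnegative.

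Next I would reorganize $S(\bq)$ as a sum over unordered triples of distinct indices. Each such triple $\{a,b,c\}$ contributes $3! = 6$ ordered terms (three choices of $i$, two orderings of $(j,k)$), and the three points $q_a, q_b, q_c$ form a (possibly degenerate) triangle with side lengths $\ell_a := |q_b - q_c|$, $\ell_b := |q_a - q_c|$, $\ell_c := |q_a - q_b|$ and opposite angles $\alpha, \beta, \gamma$ summing to $\pi$. Writing $(q_v - q_w) \cdot (q_v - q_{w'}) = \ell_w\,\ell_{w'} \cos(\text{angle at vertex } v)$ and collecting the six contributions, a short bookkeeping yields
\begin{equation*}
S_{abc} = \frac{\cos\beta + \cos\gamma}{\ell_a^{d-1}} + \frac{\cos\alpha + \cos\gamma}{\ell_b^{d-1}} + \frac{\cos\alpha + \cos\beta}{\ell_c^{d-1}}.
\end{equation*}

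The last and main step is to verify $S_{abc} \geq 0$ for every triangle. The sum-to-product identity together with $\alpha + \beta + \gamma = \pi$ gives
\begin{equation*}
\cos\alpha + \cos\beta = 2 \sin\tfrac{\gamma}{2}\,\cos\tfrac{\alpha - \beta}{2} \geq 0,
\end{equation*}
since $\gamma/2 \in [0,\pi/2]$ and $|\alpha - \beta|/2 < \pi/2$. Applying the analogous bound to the other two pairs shows that each numerator in $S_{abc}$ is nonnegative, hence $S_{abc} \geq 0$. Summing over all $\binom{N}{3}$ triples yields $S(\bq) \geq 0$, which combined with the diagonal identification proves \eqref{eq:Tq}. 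The only real subtlety lies in the trigonometric step: individual cosines may be negative in an obtuse triangle, but because at most one angle can exceed $\pi/2$, the pairwise sums appearing in $S_{abc}$ are always nonnegative.
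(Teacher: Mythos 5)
Your proof is correct and follows essentially the same route as the paper's: expand $J(\bq)$, identify the diagonal $j=k$ terms with the right-hand side, group the off-diagonal remainder by unordered triples, and reduce to the nonnegativity of a per-triangle expression in the three angles. The only difference is the final step, where your identity $\cos\alpha+\cos\beta = 2\sin(\gamma/2)\cos\big(\tfrac{\alpha-\beta}{2}\big)\ge 0$ replaces the paper's case analysis (collinear versus non-collinear, acute versus obtuse) and handles all configurations, including degenerate ones, in one stroke.
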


\begin{proof}
It is straightforward to see that
$$
\bea
& J(\bq)  = \sum_{i=1}^N \sum_{j\neq i}^N \frac{1}{|q_i  - q_j|^{d-1}} +  \sum_{i=1}^N \sum_{\substack{1\leq j< k\leq N\\ j, k\neq i}} \frac{(q_i - q_j) \cdot (q_i -q_k)}{|q_i - q_j|^d |q_i -q_k|}\\
& \qquad +  \sum_{i=1}^N \sum_{\substack{1\leq k < j\leq N\\ j, k\neq i}} \frac{(q_i - q_j) \cdot (q_i -q_k)}{|q_i - q_j|^d |q_i -q_k|}\\
& = \sum_{i=1}^N \sum_{j\neq i}^N \frac{1}{|q_i  - q_j|^{d-1}}\\
& \qquad +  \sum_{i=1}^N \sum_{\substack{1\leq j< k\leq N\\ j, k\neq i}} \frac{(q_i - q_j) \cdot (q_i -q_k)}{|q_i - q_j| |q_i -q_k|}\Big(\frac{1}{|q_i - q_j|^{d-1}} +\frac{1}{|q_i - q_k|^{d-1}} \Big)\\
& =:  J_1 + J_2,
\ena
$$
where the second equality follows from swapping the indices $k$ and $j$ in the summation  in the second line of above.

We show that $J_2$ is non-negative. In fact, $J_2$ can  be expanded as
$$
\bea
J_2 & =  \sum_{i=1}^N \sum_{\substack{1\leq j< k\leq N\\ j, k\neq i}} \frac{(q_i - q_j) \cdot (q_i -q_k)}{|q_i - q_j| |q_i -q_k|}\Big(\frac{1}{|q_i - q_j|^{d-1}} +\frac{1}{|q_i - q_k|^{d-1}} \Big)\\
& =  \Big(\sum_{1\leq i < j < k \leq N}
+  \sum_{1\leq j < i < k \leq N}  +  \sum_{1\leq j < k < i \leq N}\Big)\\
& \qquad \qquad \Big\{\frac{(q_i - q_j) \cdot (q_i -q_k)}{|q_i - q_j| |q_i -q_k|}\Big(\frac{1}{|q_i - q_j|^{d-1}} +\frac{1}{|q_i - q_k|^{d-1}} \Big)\Big\}.
\ena
$$

By relabelling the indices $i \gt j, j\gt i, k\gt k$ in the second term on the right side and
$i \gt k, j\gt i, k\gt j$ in the third term, one gets
$$
\bea
J_2 & = \sum_{1\leq i < j < k \leq N} \Big\{  \frac{(q_i - q_j) \cdot (q_i -q_k)}{|q_i - q_j| |q_i -q_k|}\,\Big(\frac{1}{|q_i - q_j|^{d-1}} +\frac{1}{|q_i - q_k|^{d-1}} \Big)\\
& \qquad + \frac{(q_j-q_i)\cdot (q_j -q_k)}{|q_j - q_i| | q_j - q_k|} \,\Big(\frac{1}{|q_j-q_i|^{d-1}} + \frac{1}{|q_j - q_k|^{d-1}}\Big)\\
& \qquad  + \frac{(q_k - q_i)\cdot(q_k - q_j)}{|q_k - q_i| |q_k - q_j|}\Big(\frac{1}{|q_k - q_i|^{d-1}} +  \frac{1}{|q_k - q_j|^{d-1}}\Big)
\Big\}.
\ena
$$
We claim that the individual terms inside the summation above are non-negative. Indeed, let us define for fixed indices $i<j<k$ the angles $\{\theta_m\}_{m=1}^3$ as follows:
$$
\bea
\cos \theta_1 &=  \frac{(q_i - q_j) \cdot (q_i -q_k)}{|q_i - q_j| |q_i -q_k|}, \cos \theta_2 = \frac{(q_j-q_i)\cdot (q_j -q_k)}{|q_j - q_i| | q_j - q_k|},\\
\cos \theta_3 &= \frac{(q_k - q_i)\cdot(q_k - q_j)}{|q_k - q_i| |q_k - q_j|}.
\ena
$$
It suffices to show that $S \geq 0$ with $S$ defined by
$$
\bea
S & = \cos\theta_1 \Big(\frac{1}{|q_i - q_j|^{d-1}} +\frac{1}{|q_i - q_k|^{d-1}} \Big) + \cos \theta_2\Big(\frac{1}{|q_j-q_i|^{d-1}} + \frac{1}{|q_j - q_k|^{d-1}}\Big) \\
& \qquad + \cos \theta_3 \Big(\frac{1}{|q_k - q_i|^{d-1}} +  \frac{1}{|q_k - q_j|^{d-1}}\Big).
\ena
$$
We prove this by considering the following two cases.

{ Case 1: The three points $\{q_i, q_j,q_k\}$ lie on a common line.  }
Without loss of generality we may assume that $q_i$ lies in the middle of the segment connecting $q_j$ and $q_k$.
In this case, we have $\cos \theta_1=-1, \cos\theta_2 = \cos\theta_3 = 1$. This leads to
$$
S \geq \frac{2}{|q_j  - q_k|^{d-1}} \geq 0.
$$

{ Case 2: The three points $\{q_i, q_j,q_k\}$ do not lie on a common line. } In this case, the there points form a triangle with edges of lengths $|q_i  - q_j |, |q_i - q_k|$ and $|q_j  - q_k|$ and with angles $\{\theta_m\}_{m=1}^3$. Hence it holds that $\theta_1 + \theta_2 + \theta_3 = \pi$ and $0 < \theta_i < \pi,i=1,2,3$.
If the triangle is an acute or right triangle, then it is obvious that $S\geq 0$ since $\cos (\theta_i) \geq 0$ for every $i=1,2,3$.
On the other hand,  if the triangle is an obtuse triangle,
then one of the angles, say $\theta_1$, is larger than $\pi/2$.
As a result, we have $\theta_2 + \theta_3 < \pi/2$ and thus
$\cos \theta_2 \geq \cos(\theta_2 + \theta_3) = -\cos(\theta_1) > 0$
and $\cos \theta_2 \geq \cos(\theta_2 + \theta_3) -\cos(\theta_1) > 0$. Consequently,
$$
S \geq - \cos \theta_1 \Big(\frac{2}{|q_j-q_k|^{d-1}}\Big) > 0.
$$
Summarizing the  discussion above yields $J_2 \geq 0$.
It follows that
$$
T(q) \geq J_1 =  \sum_{1\leq i\neq j\leq N} \frac{1}{|q_i - q_j|^{d-1}}.
$$
\end{proof}
A similar result to Lemma \ref{lem:bdT} was proved in \cite[Lemma 5.2]{bolley2018dynamics}.

\section{Proof of Main Results}\label{sec:proof}
\subsection{Proof of Proposition \ref{prop:exist}}
Proposition \ref{prop:exist} follows immediately from the following stronger result.

\begin{prop}\label{prop:pw}
For any initial point $\bx  = (\bq, \bp) \in \X$,
there exits a unique strong pathwise solution $\bx(t)$ to the equation \eqref{eq:LG}. Furthermore, for $a\in (0,\beta)$, let $W$ be the Lyapunov function given in Proposition \ref{prop:Lyapunov} with constants $\lambda, C$ defined in \eqref{ineq:L}. Then for every $\bx = (\bq, \bp) \in \X$ and $t\geq 0$,
\be\label{eq:PW}
P_t W(\bx) \leq e^{-\lambda t} W(\bx) +  \frac{C}{\lambda}.
\en
\end{prop}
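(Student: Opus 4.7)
\textbf{Proof strategy for Proposition \ref{prop:pw}.} The plan is to bootstrap the Lyapunov estimate \eqref{ineq:L} (already established in Proposition \ref{prop:Lyapunov}) from an infinitesimal bound into both (a) a proof that the SDE \eqref{eq:LG} admits a unique global pathwise solution staying in $\X$ and (b) the quantitative moment bound \eqref{eq:PW}. The two conclusions are obtained simultaneously via a standard localization-plus-Gr\"onwall argument, the only subtle point being that $W$ is defined solely on the open set $\X$, so classical SDE theory must be applied up to a sequence of stopping times that exhaust $\X$.

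First I would establish local existence and uniqueness. Because $V\in C^2(\R^d)$ and the Coulomb kernel $K$ is smooth off the diagonals, the drift $(\bp,-\gamma\bp-\nabla U(\bq))$ is locally Lipschitz on $\X$ and the diffusion coefficient is constant. Classical SDE theory therefore yields a unique strong solution $\bx(t)$ up to the exit time
\begin{equation*}
\tau_\infty := \lim_{n\to\infty}\tau_n,\qquad \tau_n := \inf\{t\geq 0:W(\bx(t))\geq n\}\wedge n.
\end{equation*}
By Proposition~\ref{prop:Lyapunov}(i), $W(\bx_k)\to\infty$ whenever $H(\bx_k)\to\infty$, and $H(\bx_k)\to\infty$ exactly when $\bx_k$ approaches $\partial\X$ (two particles collide, some $|q_i|\to\infty$, or some $|p_i|\to\infty$). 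Thus the stopping times $\tau_n$ exhaust every trajectory that would leave $\X$, and proving $\tau_\infty=\infty$ a.s.\ will simultaneously rule out collisions and finite-time blow-up.

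Next, I would apply It\^o's formula to $t\mapsto e^{\lambda t}W(\bx(t))$ on the interval $[0,t\wedge\tau_n]$, on which $W$, $\nabla W$, and $\nabla^2 W$ are bounded (so the stochastic integral is a genuine martingale, not merely a local one). Using \eqref{ineq:L} to control the drift term by $Ce^{\lambda s}$ and taking expectations produces
\begin{equation*}
\E\bigl[e^{\lambda(t\wedge\tau_n)}W(\bx(t\wedge\tau_n))\bigr]\leq W(\bx)+\frac{C}{\lambda}\bigl(e^{\lambda t}-1\bigr).
\end{equation*}
Dropping the factor $e^{\lambda(t\wedge\tau_n)}\geq 1$ on the event $\{\tau_n\leq t\}$ and applying Markov's inequality gives
\begin{equation*}
n\,\PP(\tau_n\leq t)\leq \E[W(\bx(t\wedge\tau_n))\mathbf{1}_{\tau_n\leq t}]\leq W(\bx)+\tfrac{C}{\lambda},
\end{equation*}
so $\PP(\tau_n\leq t)\to 0$ as $n\to\infty$ for each fixed $t$. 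Hence $\tau_\infty=\infty$ a.s., which proves the global existence and uniqueness asserted in Proposition~\ref{prop:exist} and confirms that $\bx(t)\in\X$ for all $t>0$ a.s.

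Finally, having $\tau_n\uparrow\infty$ a.s., I would pass to the limit $n\to\infty$ in the displayed expectation bound by Fatou's lemma (using continuity of $W$ and of $t\mapsto \bx(t)$ on $[0,t]$), yielding
\begin{equation*}
\E[W(\bx(t))]\leq e^{-\lambda t}W(\bx)+\frac{C}{\lambda}\bigl(1-e^{-\lambda t}\bigr)\leq e^{-\lambda t}W(\bx)+\frac{C}{\lambda},
\end{equation*}
which is precisely \eqref{eq:PW}. The only step requiring real care is the coupling between the Lyapunov estimate and the localization: one must verify that the $\tau_n$ really do capture all ways of leaving $\X$, which is exactly the content of the coercivity statement $W\to\infty$ as $H\to\infty$ proved earlier. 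Everything else is routine.
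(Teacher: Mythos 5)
Your proposal is correct and follows essentially the same route as the paper: localize with stopping times at level sets of $W$, use the coercivity of $W$ (the paper's Lemma~\ref{lem:coercive}) to identify exit from $\X$ with explosion of $W$, apply It\^o's formula to $e^{\lambda t}W$ together with \eqref{ineq:L}, rule out finite-time explosion via Markov's inequality, and pass to the limit to obtain \eqref{eq:PW}. The only (harmless) slip is that your Markov-inequality bound should read $W(\bx)+\tfrac{C}{\lambda}(e^{\lambda t}-1)$ on the right, which is still a finite constant for fixed $t$ and does not affect the conclusion.
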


\begin{proof}
For $R > 0 $, let us define stopping times
$$
\tau_{R} = \inf \{ t\geq 0: W(\bx_t) > R\}, \tau = \lim_{R\gt\infty} \tau_{R} \text{ and } \tau_{\partial \X} = \inf \{ t\geq 0: \bx_t\in \partial \X\}.
$$
Thanks to the coercivity of the function $W(\bx)$ from Lemma \ref{lem:coercive}, we have the inclusion $\{\tau = \infty\}\subset \{\tau_{\partial \X} = \infty\}$.
As a result, equation \eqref{eq:LG} has a strong pathwise solution $\bx_t$ up to the stopping time $\tau_R$.
We want to show that $\mathbf{P}_\bx (\tau  = \infty) = 1$.

In fact, an application of Ito's formula to the function $f(\bx, t) = e^{\lambda t} (W(\bx) - \frac{C}{\lambda})$ yields
$$
\mathbf{E}_\bx [e^{t\wedge \tau_R}\cdot W(\bx_{t\wedge \tau_R})] \leq W(\bx) + \frac{Ce^{\lambda t}}{\lambda}.
$$
Now consider a fixed $t > 0$ and letting $R\gt\infty$. One can obtain from above and the fact that $W\geq 0$ that
\be\label{eq:RP}
R \cdot \mathbf{P}_\bx (\tau_R \leq t) \leq W(\bx) + \frac{Ce^{\lambda t}}{\lambda},
\en
which implies that
$$
\mathbf{P}_\bx (\tau \leq t) = \lim_{R\gt\infty} \mathbf{P}_\bx (\tau_R \leq t)
 \leq \lim_{R\gt\infty} \frac{1}{R} \Big(W(\bx) + \frac{Ce^{\lambda t}}{\lambda}\Big) = 0.
$$
Thus we have obtained that $\mathbf{P}_\bx (\tau_R > t) = 1$ for any finite $t> 0$.
This in turn implies that the particles system $\bx_t$ do not explode or collide in finite time.

Finally, using again the estimate \eqref{eq:RP}, one has
$$
e^{\lambda t} \mathbf{E}_\bx[W(\bx(t)) \mathbf{1}_{\tau_R \geq t}] \leq W(\bx) + \frac{Ce^{\lambda t}}{\lambda},
$$
which leads to \eqref{eq:PW} by letting $R\gt\infty$ since $\lim_{R\gt\infty}\mathbf{P}_\bx ({\tau_R \geq t}) = 1$.

\end{proof}

\begin{rem}
A similar stopping time argument using the Hamiltonian $H$  as a Lyapunov function instead of $W$ would lead to
$$
\mathbf{E}_\bx H(\bx_t) \leq H(\bx) + Ct
$$
and hence also prevents explosion as well as collision leading to global well-posedness of solution. However, an estimate of the form \eqref{eq:PW} would not be available.
\end{rem}

\begin{lem}[Coercivity of $W$ and $H$]\label{lem:coercive}
Let $H(\bq, \bp)$ and $W(\bq, \bp)$ be given by \eqref{eq:H}
and \eqref{eq:W3} with parameters chosen as in \eqref{eq:parameter}.  Then
$$
\lim_{(\bq,\bp) \gt \partial \X}  W(\bq, \bp) = \lim_{(\bq,\bp) \gt \partial \X}  H(\bq, \bp) = +\infty.
$$
\end{lem}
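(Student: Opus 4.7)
The plan is to first prove coercivity of $H$ by a case analysis on how a sequence $(\bq^n,\bp^n)$ can approach $\partial\X$, and then deduce coercivity of $W$ from its exponential form together with the fact (noted in the paper right after \eqref{eq:W3}) that $\Psi=o(H)$ as $H\to\infty$.

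From the description of $\partial\X=\{\partial\mD\times(\R^d)^N\}\cup\{\mD\times\{\infty\}\}$ and the explicit form \eqref{eq:O} of $\mD$, any sequence $(\bq^n,\bp^n)\to\partial\X$ has a subsequence satisfying at least one of the three regimes:
(a) $|\bp^n|\to\infty$;
(b) $|\bq^n|\to\infty$;
(c) $\min_{i\neq j}|q_i^n-q_j^n|\to 0$.
It therefore suffices to establish $H\to\infty$ along each such regime separately. The key auxiliary fact I will use throughout is that the interaction energy $\tfrac{1}{2N}\sum_{i\neq j}K(q_i-q_j)$ is uniformly bounded below on any set where $|\bq|$ is bounded: for $d\geq 3$ this is trivial since $K\geq 0$, and for $d=2$ the bound $-\log|q_i-q_j|\geq-\log(2\max_i|q_i|)$ gives an at-worst logarithmic lower bound in $|\bq|$, which will be dominated by the quadratic confinement in every regime below.

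In regime (a), using $V\geq-M$ from Assumption~\ref{ass:v}(i) together with the above lower bound on the interaction, we have $H\geq\tfrac12|\bp^n|^2-C(1+\log_+|\bq^n|)$; if $|\bq^n|$ stays bounded this is $\tfrac12|\bp^n|^2-O(1)$ and we are done, and if $|\bq^n|\to\infty$ as well we fall into regime (b). In regime (b), Assumption~\ref{ass:v}(i) yields $\sum_iV(q_i^n)\geq c_1|\bq^n|^2-NM$, which dominates both the kinetic term (trivially) and the logarithmic interaction in the $d=2$ case, so $H\to\infty$. In regime (c) we have $|\bq^n|$ bounded (otherwise we are back in (b)), so every $K(q_k^n-q_l^n)$ with $\{k,l\}\neq\{i,j\}$ is bounded below; the one bad pair produces $K(q_i^n-q_j^n)\to+\infty$ (since $-\log|x|\to+\infty$ and $|x|^{2-d}\to+\infty$ as $|x|\to 0$), driving $H\to\infty$.

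For the coercivity of $W$, I will verify directly from \eqref{eq:Psi} that $\Psi=o(H)$ as $H\to\infty$: the first sum is bounded in absolute value by $\tfrac{2b(N-1)}{N}|\bp|$ because each summand $\frac{(p_i-p_j)\cdot(q_i-q_j)}{|q_i-q_j|}$ is controlled by $|p_i-p_j|$ after Cauchy--Schwarz, and the term $c\,\bp\cdot\bq$ is bounded by $\tfrac{c}{2}(|\bp|^2+|\bq|^2)$ via Young. Combined with $H\geq\tfrac12|\bp|^2+c_1|\bq|^2-C(1+\log_+|\bq|)$ from the quadratic confinement and the uniform interaction lower bound, this gives $|\Psi|/H\to 0$ in each of the three regimes (in regime (c) the numerator stays bounded while the denominator blows up). Therefore $W=\exp(aH(1+o(1)))\to+\infty$ whenever $H\to+\infty$, and the lemma follows.

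The only mildly delicate point, and the one I would write out with care, is controlling the interaction lower bound in the $d=2$ logarithmic case so that it is genuinely $o(|\bq|^2)$; everything else is routine from Assumption~\ref{ass:v} and the structure of $\Psi$.
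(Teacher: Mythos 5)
Your proof is correct and follows essentially the same route as the paper's: reduce coercivity of $W$ to that of $H$ via the bound $|\Psi|\leq \eps H + C$ for small $c$, then split the approach to $\partial\X$ into the three regimes $|\bp|\to\infty$, $|\bq|\to\infty$, and $\min_{i\neq j}|q_i-q_j|\to 0$, with only the $d=2$ logarithmic interaction requiring care. The sole cosmetic difference is that for $d=2$ you bound $-\log|q_i-q_j|\geq -\log(2\max_k|q_k|)$, whereas the paper uses $\log x\leq x$ to get $-\log|q_i-q_j|\geq -|q_i-q_j|$ and absorbs it by Young's inequality; both are dominated by the quadratic confinement, so the arguments are interchangeable.
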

\begin{proof}
Since we have seen in the previous section that $W(\bq, \bp) = e^{a H(\bq,\bp) (1+ o(1))}$, we only need to show the above is valid for $H$.
 For this, it suffices to  show that for any $R > 0$, there exists $R_1 > 0,R_2 > 0, r > 0$ such that
 $H(\bq, \bp) \geq R$ provided that one of
 the following holds: $|\bp| \geq R_1$, or  $|\bq| \geq R_2$ or $\min_{i\neq j} |q_i -q_j| \leq r$.

 This is obvious for $d \geq 3$. In fact, since both $K$ and $V$ are non-negative and $V$ grows at least
 quadratically at infinity by Assumption \ref{ass:v}, in view of the expression
$$
\bea
H(\bq, \bp) =  \frac{1}{2} |\bp|^2 + \sum_{i=1}^N V(q_i) + \frac{1}{2N} \sum_{1\leq i \neq j \leq N} K(q_i -q_j),
\ena
$$
the claim above is valid if one picks for example $R_1 = \sqrt{2R}, R_2 = \sqrt{\frac{M+R}{c_1}}, r = (\frac{1}{2NR})^{\frac{1}{d-2}}$.

For $d= 2$, we follow \cite[Lemma 3.2]{bolley2018dynamics}. In fact, it follows from the simple inequality $\log x \leq x$ for any  $x > 0$ that
$$\bea
H(\bq, \bp) & \geq \frac{1}{2} |\bp|^2 + c_1 |\bq|^2 -M - \frac{1}{2N} \sum_{1\leq i\neq j \leq N} |q_i - q_j|\\
& \geq  \frac{1}{2} |\bp|^2 + c_1 |\bq|^2 -M - \sum_{i=1}^N |q_i| \\
& \geq  \frac{1}{2} |\bp|^2 + \frac{c_1}{2} |\bq|^2 -M -\frac{N}{2c_1 },
\ena
$$
where we used the Young's inequality in the last line. Consequently, $H \geq R$ if either
$|\bp| \geq R_1 = \sqrt{2 (R+ M + \frac{N}{2c_1 })}$
or $|\bq| \geq R_2 = \sqrt{\frac{2}{c_1} (R+ M + \frac{N}{2c_1 })}$. Finally,
we only need to show that $H\geq R$ when $|\bp|\leq R_1, |\bq| \leq R_2$ and $|q_i -q_j|\leq r$ for some small $r > 0$ and for some $1\leq i\neq j\leq N$.
To see this, observe that for any $i\neq j$,
$$
H(\bq, \bp) \geq  - \frac{1}{N} \log |q_i -q_j| -
\frac{1}{2N} \sum_{\substack{1\leq k\neq \ell \leq N\\ \{k,\ell\} \neq \{i,j\}}} \log |q_k - q_\ell|
\geq  - \frac{1}{N} \log |q_i -q_j| - \sqrt{N} R_2,
$$
where the last line follows from
$$
\frac{1}{2N} \sum_{\substack{1\leq k\neq \ell \leq N\\ \{k,\ell\} \neq \{i,j\}}} \log |q_k - q_\ell| \leq  \sum_{k=1}^N|q_k| \leq \sqrt{N} |\bq| = \sqrt{N} R_2.
$$
Hence if $|q_i - q_j| \leq r = e^{-N(R + \sqrt{N}R_2)}$, then $H \geq R$. This completes the proof of the lemma.
\end{proof}

\subsection{Proof of Theorem \ref{thm:ergodicity}}
The proof of Theorem \ref{thm:ergodicity} is based on Proposition \ref{prop:Lyapunov} and a local minorization condition on the transition density $P_t(\bx, \cdot)$ as shown in the next proposition.

\begin{prop}\label{prop:minor}
For any $a\in (0,\beta)$, let $W$ be the Lyapunov function defined in Proposition \ref{prop:Lyapunov}. Define for $R > 0$ the compact set $\mathcal{C}_R = \{\bx\in \X: W(\bx) \leq R\}$. Then for any sufficiently large $R>0$ and  any $t_0 > 0$, there exists a probability measure $\nu$ on $\X$ and a constant $\alpha > 0$ such that for every Borel set $A\subset \X$ and every $\bx\in \mathcal{C}_R $,
\be\label{eq:minor}
P_{t_0} (\bx, A) \geq \alpha \nu (A).
\en
\end{prop}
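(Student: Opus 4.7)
The plan is to prove the minorization condition by the standard strategy of combining hypoellipticity (to get a smooth transition density), the Stroock--Varadhan support theorem together with a controllability argument (to show the density is strictly positive throughout $\X$), and joint continuity on a compact set (to turn pointwise positivity into a uniform lower bound).

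First, I would verify \emph{hypoellipticity} of the generator $\mL$. Writing \eqref{eq:LG} in Stratonovich form, the driving vector fields are $X_0 = \bp\cdot \nabla_\bq - (\gamma \bp+\nabla_\bq U(\bq))\cdot\nabla_\bp$ and $X_i = \sqrt{2\gamma\beta^{-1}}\,\partial_{p_i}$. Since $U\in C^2(\mD)$, the Lie brackets $[X_i,X_0]$ contribute nonzero components in the $\bq$-directions, so together with the $X_i$ themselves they span $T_{\bx}\X$ at every $\bx\in\X$. H\"ormander's theorem then guarantees that $P_{t_0}(\bx,d\by)$ admits a density $p_{t_0}(\bx,\by)$ that is jointly $C^\infty$ on $\X\times\X$.

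Second, I would run the \emph{controllability} argument needed by the Stroock--Varadhan support theorem. Fix $\bx_0=(\bq_0,\bp_0)$ and $\bx_1=(\bq_1,\bp_1)$ in $\X$. Since $\mD$ is open and path connected (recall the lemma stated after \eqref{eq:O}), one can select a smooth curve $\bq\in C^2([0,t_0];\mD)$ with $\bq(0)=\bq_0$, $\bq(t_0)=\bq_1$, $\dot\bq(0)=\bp_0$ and $\dot\bq(t_0)=\bp_1$; concretely, take any continuous path in $\mD$ connecting $\bq_0$ to $\bq_1$, thicken it to lie in a tubular neighborhood inside $\mD$, mollify, and fix the boundary values of the derivatives by adding compactly supported bump corrections. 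Defining the control
\[
u(t)=\ddot \bq(t)+\gamma\,\dot\bq(t)+\nabla U(\bq(t)),
\]
the pair $(\bq,\dot\bq)$ solves the skeleton ODE associated with \eqref{eq:LG} with input $u$ in place of $\sqrt{2\gamma\beta^{-1}}\,d\bB/dt$, and $\bq(t)\in\mD$ for every $t$. Since $u$ is a continuous deterministic control and $\bx_0,\bx_1$ were arbitrary, the support theorem implies that $\bx_1$ lies in the topological support of the law of $\bx(t_0)$ starting at $\bx_0$. Coupled with smoothness from Step~1 and the fact that $\X$ is open and connected, this gives $p_{t_0}(\bx,\by)>0$ for every $(\bx,\by)\in\X\times\X$.

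Third, I would upgrade pointwise positivity to the claimed uniform minorization. By Lemma \ref{lem:coercive}, the sublevel set $\mathcal{C}_R=\{W\le R\}$ is relatively compact in $\X$ and stays a positive distance from $\partial\X$; fix any open ball $B\subset\X$ with positive Lebesgue measure whose closure lies inside $\X$. Joint continuity of $p_{t_0}$ together with compactness of $\mathcal{C}_R\times \overline{B}$ and strict positivity from Step~2 yield
\[
\alpha_0:=\min_{(\bx,\by)\in \mathcal{C}_R\times \overline{B}} p_{t_0}(\bx,\by)>0.
\]
Setting $\nu(A):=|A\cap B|/|B|$ and $\alpha:=\alpha_0 |B|$, one immediately gets $P_{t_0}(\bx,A)\ge \int_{A\cap B} p_{t_0}(\bx,\by)\,d\by\ge \alpha\,\nu(A)$ for every Borel $A\subset\X$ and every $\bx\in\mathcal{C}_R$, which is \eqref{eq:minor}.

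The main technical obstacle is the controllability construction in Step~2, because $\mD$ is a singular, non-convex open set (the complement of all coincidence hyperplanes): one must be careful that the smooth path $\bq(t)$ never leaves $\mD$ and that the boundary conditions on $\bq$ and $\dot\bq$ can be matched simultaneously. Everything else is structurally standard once path-connectedness of $\mD$, the coercivity of $W$, and H\"ormander's condition are in place.
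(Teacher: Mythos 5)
Your route is the standard one and it is exactly what the paper relies on: the paper does not prove Proposition \ref{prop:minor} in-house but cites \cite[Corollary 5.12]{herzog2017ergodicity} and \cite[Lemma 2.3]{mattingly2002ergodicity}, and those proofs are precisely your three steps (H\"ormander hypoellipticity for a smooth density, Stroock--Varadhan support theorem via an explicit control $u=\ddot\bq+\gamma\dot\bq+\nabla U(\bq)$ along a path kept inside $\mD$, then compactness of $\mathcal{C}_R$ to get a uniform lower bound against Lebesgue measure on a ball). The bracket computation is right ($[X_i,X_0]$ already produces the $\bq$-directions at the first step), and the controllability construction is sound given that $\mD$ is open and path connected.

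The one deduction that does not stand as written is the end of your Step 2: ``support of $P_{t_0}(\bx,\cdot)$ equals $\X$'' plus continuity of $p_{t_0}(\bx,\cdot)$ does \emph{not} imply $p_{t_0}(\bx,\by)>0$ for \emph{every} $\by$; it only implies that the density is positive on a dense open set (a continuous density can vanish on a closed nowhere-dense set while every open set still has positive mass). The standard repair, which is what the cited lemmas actually do, is a Chapman--Kolmogorov splitting: fix any $\by_0\in\X$, choose $\by_1$ with $p_{t_0/2}(\by_0,\by_1)>0$ and, by joint continuity, neighborhoods $U\ni\by_0$, $V\ni\by_1$ with $\inf_{U\times V}p_{t_0/2}=\delta>0$; the support theorem gives $P_{t_0/2}(\bx,U)>0$ for each $\bx$, and lower semicontinuity of $\bx\mapsto P_{t_0/2}(\bx,U)$ together with compactness of $\mathcal{C}_R$ gives $\inf_{\bx\in\mathcal{C}_R}P_{t_0/2}(\bx,U)=c>0$; then $p_{t_0}(\bx,\by)\ge\int_U p_{t_0/2}(\bx,\bz)p_{t_0/2}(\bz,\by)\,d\bz\ge c\,\delta$ for all $\bx\in\mathcal{C}_R$ and $\by\in V$, and you take $\nu$ to be normalized Lebesgue measure on $V$. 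With that substitution (you never need positivity of $p_{t_0}$ on all of $\X\times\X$, only on $\mathcal{C}_R\times V$ for one well-chosen $V$), your proof is complete and coincides with the paper's.
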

\begin{proof}
The proof of the proposition is the same as the proof of \cite[Corollary 5.12]{herzog2017ergodicity} (see also \cite[Lemma 2.3]{mattingly2002ergodicity}) and hence omitted.
\end{proof}

\begin{prop}\label{prop:Feller}
For every $\bx\in \X$ and $t>0$, we have $\text{supp } P_t(\bx, \cdot) = \X$, where $\text{supp } \nu $ denotes
the support of the measure $\nu$. In addition, the measure  $ P_t(\bx, \cdot)$ is absolutely continuous with
respect to the Lebesgue measure on $\X$. Let $p_t(\bx,\by)$ be the associated  probability density. Then the mapping $(t, \bx, \by) \mapsto p_t(\bx,\by): (0,\infty) \times \X\times \X  \gt [0,\infty)$ is continuous.
\end{prop}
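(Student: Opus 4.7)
The three claims correspond to the three classical structural properties of hypoelliptic diffusions: full support, smooth transition densities, and joint regularity. The plan is to combine the Stroock--Varadhan support theorem with H\"ormander's hypoellipticity theorem, applied on the open, path-connected state space $\X$ (on which the drift is smooth thanks to Proposition \ref{prop:exist}).

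\textbf{Step 1: Full support via Stroock--Varadhan.} Rewrite \eqref{eq:LG} in Stratonovich form; since the diffusion coefficient $\sqrt{2\gamma\beta^{-1}}$ is constant, It\^o and Stratonovich forms coincide. The Stroock--Varadhan support theorem \cite{stroock1972support, stroock1973probability} then identifies $\mathrm{supp}\, P_t(\bx,\cdot)$ with the closure in $\X$ of the set of endpoints $\bx_{\bu}(t)$ of the control ODE
\begin{equation*}
\dot{\bq} = \bp, \qquad \dot{\bp} = -\gamma\bp - \nabla U(\bq) + \sqrt{2\gamma\beta^{-1}}\,\bu(s),
\end{equation*}
as $\bu \in C([0,t];(\R^d)^N)$ ranges over smooth controls such that the trajectory stays in $\X$. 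Given arbitrary endpoints $\bx_0=(\bq_0,\bp_0)$ and $\bx_1=(\bq_1,\bp_1)$ in $\X$, the path-connectedness of $\mD$ (Lemma right after \eqref{eq:O}) allows us to pick a smooth curve $\bq:[0,t]\to\mD$ with $\bq(0)=\bq_0$, $\bq(t)=\bq_1$, $\dot\bq(0)=\bp_0$, $\dot\bq(t)=\bp_1$. Setting $\bp(s)=\dot\bq(s)$ and
\begin{equation*}
\bu(s) = \tfrac{1}{\sqrt{2\gamma\beta^{-1}}}\bigl(\ddot\bq(s)+\gamma\dot\bq(s)+\nabla U(\bq(s))\bigr)
\end{equation*}
reproduces the target. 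Since $\bx_1$ is arbitrary in $\X$, we conclude $\mathrm{supp}\, P_t(\bx,\cdot) = \X$.

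\textbf{Step 2: Absolute continuity via H\"ormander.} Write the generator in H\"ormander form
\begin{equation*}
\mL = X_0 + \gamma\beta^{-1}\sum_{i=1}^{N}\sum_{j=1}^{d} (X_{i,j})^2,
\end{equation*}
with drift field $X_0 = \bp\cdot\nabla_\bq - \gamma\bp\cdot\nabla_\bp - \nabla U(\bq)\cdot\nabla_\bp$ and noise fields $X_{i,j} = \partial_{p_i^{(j)}}$. A direct bracket computation on $\X$, where $\nabla U$ is smooth, yields $[X_0, X_{i,j}] = -\partial_{q_i^{(j)}} + \gamma\,\partial_{p_i^{(j)}}$, hence $\mathrm{span}\{X_{i,j},\,[X_0,X_{i,j}]\}_{i,j}$ equals the full tangent space at every point of $\X$. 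H\"ormander's theorem \cite{hormander1967hypoelliptic} applied on the open set $\X$ then gives that $P_t(\bx,\cdot)$ admits a density $p_t(\bx,\by)$ with respect to Lebesgue measure on $\X$, smooth in $(\bx,\by)$, and the density is also smooth in $t\in(0,\infty)$ by standard parabolic hypoellipticity applied to the forward and backward Kolmogorov operators $\partial_t - \mL$ and $\partial_t - \mL^*$ on $(0,\infty)\times\X$.

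\textbf{Step 3: Joint continuity.} Joint continuity of $(t,\bx,\by)\mapsto p_t(\bx,\by)$ on $(0,\infty)\times\X\times\X$ is then a direct byproduct of the $C^\infty$ regularity obtained in Step 2.

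The main obstacle is Step 1, more precisely verifying that the controlled trajectory stays inside $\X$, i.e. that one can join any two configurations without particle collisions or escapes to infinity. This is genuinely a statement about the geometry of $\mD$, and the key ingredient is the elementary fact that $\mD$ (and therefore $\X$) is open and path-connected, which reduces it to a smoothing/interpolation argument matching the prescribed boundary data for $\bq$ and $\dot\bq$. Once that is in hand, Steps 2 and 3 are standard applications of H\"ormander's theorem since we only need the drift to be smooth on $\X$, which it is away from collisions.
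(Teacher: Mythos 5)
Your proposal is correct and follows exactly the route the paper intends: the paper omits the argument and cites \cite{herzog2017ergodicity}, but its surrounding discussion makes clear the proof is the Stroock--Varadhan support theorem (with a control constructed from a collision-avoiding path in the path-connected set $\mD$) combined with H\"ormander's theorem, whose bracket condition $[X_0,\partial_{p_i^{(j)}}]=-\partial_{q_i^{(j)}}+\gamma\,\partial_{p_i^{(j)}}$ you verify correctly. The only detail worth flagging is that both theorems are stated for smooth bounded coefficients, so one should localize via the stopping times of Proposition \ref{prop:pw} before applying them on $\X$, but this is routine.
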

\begin{proof}
The proof of the proposition is the same as  the proof of \cite[Proposition]{herzog2017ergodicity} and hence omitted.
\end{proof}

\begin{proof}[Proof of Theorem \ref{thm:ergodicity}]
First it is straightforward to check that the measure $\pi$ is an invariant measure. The uniqueness follows from \cite[Theorem 3.16]{hairer2006ergodicity}  and the fact that $P_t$ is strong Feller and that $\text{supp } \nu = \X$ for any invariant measure $\nu$ as shown in Proposition \ref{prop:Feller}.

To show the exponential convergence to the equilibrium we apply \cite[Theorem 1.2]{hairer2011yet} to the sampled Markov chain on $\X$ with transition density $P_n(\bx, \cdot) := P_{n \Delta t}(\bx, \cdot)$ with $\Delta t > 0$ and $n\in \N$. In fact, Proposition \ref{prop:pw} and Proposition \ref{prop:minor} imply Assumption 1 and Assumption 2 in \cite{hairer2011yet}. As a result of \cite[Theorem 1.2]{hairer2011yet}, one obtains that there exists $C>0$ and $\delta\in (0,1)$ such that
$$
\rho_W( \nu_1 P_n,  \nu_2 P_n) \leq C\delta^n \rho_W(\nu_1,  \nu_2)
$$
for all $\nu_1, \nu_2\in \P_W$. To see the above bound holds for any $t$, let us write $t = n \Delta t + r$ with $r \in (0, \Delta t)$. For any measurable function  $\varphi: \X \gt \R$ with $|\varphi|_W \leq 1$, it follows from Proposition \ref{prop:pw} that
$$
|P_r \varphi| \leq |\varphi|_W\, \sup_\bx \frac{1 + P_r W(\bx)}{1 + W(\bx)} \leq C^\prime
$$
for some $C^\prime > 0$ independent of $r > 0$. As a consequence of Fubini theorem and Chapman-Kolmogorov equation, it holds that
$$
\rho_W (\nu_1 P_t, \nu_2 P_t) = \rho_W(\nu_1  P_n P_r, \nu_2 P_n P_r ) \leq  C^\prime \rho_W (\nu_1 P_n, \nu_2 P_n) \leq CC^\prime\delta^n \rho_W(\nu_1,  \nu_2).
$$
By defining $\eta = -\frac{\log \delta}{\Delta t}$ and $\tilde{C} = CC^\prime/\delta^{1/\Delta t}$, one obtains the desired bound
$$
\rho_W (\nu_1 P_t, \nu_2 P_t) \leq \tilde{C} e^{-\eta t} \rho_W(\nu_1, \nu_2).
$$
The inequality \eqref{eq:expdecay} follows by setting $\nu_2 = \pi$ and $\pi P_t = \pi$.

\end{proof}

\section{Extension and outlook}\label{sec:ext}
Although the present paper focuses on the Langevin particle system with Coulomb kernel, the main theorem \ref{thm:ergodicity} is still valid for
   Riesz-type repulsive kernels
   $$
   K(q) = |q|^{-s} \text{ with } 0 < s< d.
   $$
   The restriction $s < d$ is natural because otherwise the invariant measure is not normalizable and hence not a probability measure.
   In the Riesz case, Proposition \ref{prop:Lyapunov} still holds with the same Lyapunov function \eqref{eq:W3} and
   the same corrector $\Psi$ given by \eqref{eq:Psi}.
   The proof proceeds in the same manner as in Section \ref{sec:model2} as soon as one observes
   that the key inequality \eqref{eq:Tq} in  Lemma \ref{lem:bdT} is still valid with $d$ replaced by $s+2$.

For the same reason, our results also extend trivially to Langevin particles system \eqref{eq:LGN} in 1D in which the
kernel $K$ has Riesz or Log singularity. The only differece of the one dimensional system from systems in high dimensions is that
the domain $\mD$ of $U$ in this case is not pathwise connected. Therefore, the dynamics of the particles need to be restricted
to one of the connected components (depending on the initial configuration). For example, one may set the state space $\X = \mD\times \R^N$ with
$$
\mD = \{\bq = (q_1, q_2,\cdots, q_N) \in \R^N: q_1 < q_2 <\cdots < q_N\}.
$$
The convergence to the equilibrium result holds with the equilibrium measure $\pi$ restricted on $\X$ as well.

The convergence of Langevin dynamics with Coulomb kernel is studied via a probabilistic approach in this paper
based on a novel construction of Lyapunov function. The Lyapunov function satisfies
the Lyapunov condition \eqref{ineq:L} with the constants $\lambda, C$ depending on $N$ explicitly. However, the $N$-dependence of the constant $\alpha$ in the  minorization  condition \eqref{eq:minor} is very implicit. It remains an interesting question to obtain
an explicit estimate for the minorization condition so that one can obtain an $N$-explicit
convergence rate for the Langevin dynamics.

\section*{Acknowledgement}

Both authors acknowledge the support of the National Science Foundation through the grant DMS-1613337. YL  also acknowledges the partial support from the Duke Trinity College of Arts \& Sciences Office of the Dean.

\bibliographystyle{abbrv}
\bibliography{Coulomb}

\end{document}